\documentclass{scrartcl}
\usepackage[dvipsnames]{xcolor}
\usepackage{amsmath,amssymb,amsthm}
\usepackage{url}
\usepackage{ifdraft}

\ifdraft
{
  \usepackage{showkeys}
}
{
}

\usepackage{hyperref}
\usepackage{cleveref}
\usepackage{tikz}
\usetikzlibrary{arrows}
\usetikzlibrary{patterns,matrix,positioning}
\usetikzlibrary{arrows.meta, positioning, calc, decorations.pathreplacing, fit}
\usepackage{subcaption}  
\usepackage{tikz-cd}
\usepackage{float}
\usepackage{shuffle}
\usepackage{bbold} 

\usepackage{mathbbol}
\DeclareMathSymbol{\COMP}{\mathrel}{bbold}{\lq\;}
\DeclareMathSymbol{\rawCOMPDC}{\mathrel}{bbold}{\lq\;}

\usepackage[defernumbers=true,backend=biber,isbn=false,url=false,maxbibnames=999,maxcitenames=2,doi=false,giveninits=true,sortcites=true,style=alphabetic,eprint=false,hyperref=true]{biblatex}
\addbibresource{literature.bib}
\addbibresource{twosig.bib}

\usepackage[normalem]{ulem}

\makeatletter
\renewcommand\@makefnmark{\hbox{\@textsuperscript{\normalfont\color{green}\@thefnmark}}}
\makeatother

\newcommand\REF[1]{\references{#1}}

\usepackage[linecolor=white,backgroundcolor=white,bordercolor=white,textsize=tiny,textcolor=magenta]{todonotes}

\newcommand\magenta[1]{{\color{magenta}{#1}}}
\newcommand\cyan[1]{{\color{cyan}{#1}}}

\newcommand\eps\epsilon

\newtheorem{theorem}{Theorem}[section]
\theoremstyle{plain}

\newtheorem{example}[theorem]{Example}

\newtheorem{notation}[theorem]{Notation}

\newtheorem{remark}[theorem]{Remark}

\newtheorem*{remark*}{Remark}
\newtheorem*{example*}{Example}


\theoremstyle{definition} 

\newtheorem{definition}[theorem]{Definition}
\newtheorem*{tata}{Generalization}
  {\begin{mdframed}[backgroundcolor=lightgray]\begin{tata}}%
  {\end{tata}\end{mdframed}}

\newcommand\K{\mathbb{K}}
\newcommand\R{\mathbb{R}}

\newcommand\BB{{\mathcal{B}}}
\newcommand\CC{{\mathcal{C}}} 
\newcommand\DD{{\mathcal{D}}}

\newcommand\DCC{{\mathbb{C}}} 
\newcommand\DCD{{\mathbb{D}}}

\newcommand\DGG{{\mathbb{G}}} 
\newcommand\DGH{{\mathbb{H}}}

\newcommand\D{{\mathcal{D}}}
\newcommand\N{\mathbb{N}}
\newcommand\Z{\mathbb{Z}}

\newcommand\id{\mathsf{id}}
\newcommand\DEF[1]{\textbf{#1}}

\renewcommand\O {\mathcal{O}}

\newcommand\GL{\mathsf{GL}}

\newif\ifshow
\showtrue
\showfalse

\definecolor{my-yellow}{HTML}{f0fff9} 

\ifshow
\newcommand\NEXTTIME[1]{\todo{NEXT TIME: #1}}
\usepackage{showkeys}
\else
\newcommand\NEXTTIME[1]{}
\fi

\newcommand\RECT{{\mathbb{R}\mathsf{ect}}}

\newcommand\leftboundary[1]{%
  \mathchoice
    {\uparrow\kern-0.4em\scalebox{0.4}{#1}} 
    {\uparrow\kern-0.4em\scalebox{0.4}{#1}} 
    {\uparrow\kern-0.0em\scalebox{0.4}{#1}} 
    {\uparrow\kern-0.1em\scalebox{0.4}{#1}} 
}
\newcommand\rightboundary[1]{
    \mathchoice
    {\scalebox{0.4}{#1}\kern-0.4em\uparrow}    
    {\scalebox{0.4}{#1}\kern-0.4em\uparrow}    
    {\scalebox{0.4}{#1}\kern-0.0em\uparrow}    
    {\scalebox{0.4}{#1}\kern-0.1em\uparrow}    
}


\newcommand\feedbackGG{\tau}        
\newcommand\actionGG{\triangleright}       
\newcommand\Aut{\operatorname{Aut}}

\newcommand\CAT{\underline{\mathbf{Cat}}}

\newcommand\SET{\underline{\mathbf{Set}}}

\newcommand\obj{\mathsf{obj}}
\renewcommand\hom{\mathsf{hom}}

\newcommand\categoryConstellation[5]{
  \begin{tikzpicture}[->,>=latex,node distance=4cm,scale=1.2,auto,baseline={(C1.base)}]
    \node (C1) at (0,0) {\( #2 \)};
    \node (C0) at (2,0) {\( #1 \)};
    
    \draw[->] (C1) to[bend left] node[above] {\( #3 \)} (C0);
    \draw[->] (C1) to[bend right] node[below] {\( #4 \)} (C0);
    \draw[->] (C0) to node[midway, anchor=center, fill=white, inner sep=0.2pt] {\( #5 \)} (C1);
  \end{tikzpicture}
}

\newcommand\inlineCategoryConstellation[5]{
        \categoryConstellation{#1}{#2}{#3}{#4}{#5}
}
\usepackage{enumitem}
\setlist[enumerate]{label=\roman*.}

\newcommand\INT{\underline{\mathbf{Int}}}
\newcommand\B{\mathbf{B}}
\renewcommand\i{\mathsf{i}}

\newcommand\Forget{\mathsf{Forget}}
\newcommand\Free{\mathsf{Free}}
\newcommand\QUIVER{\underline{\mathbf{Quiver}}}

\newcommand\FCQ{\Forget} 

\newcommand\FQC{\Free} 
\renewcommand\k{{\mathbf{k}}}

\newcommand{\SQUAREPATTERN}[5]{%
\scalebox{0.7}{$
  \begin{matrix}
   & #3 & \\
  #4 & #5 & #2 \\
   & #1 & \\
  \end{matrix}
$}
}

\definecolor{chartjs-red}{RGB}{255, 99, 132}
\definecolor{chartjs-orange}{RGB}{255, 159, 64}
\definecolor{chartjs-yellow}{RGB}{255, 205, 86}
\definecolor{chartjs-green}{RGB}{75, 192, 192}
\definecolor{chartjs-blue}{RGB}{54, 162, 235}
\definecolor{chartjs-purple}{RGB}{153, 102, 255}
\definecolor{chartjs-grey}{RGB}{201, 203, 207}
\definecolor{aggregation-color}{RGB}{255, 0, 255}
\newcommand\ac[1]{\textcolor{aggregation-color}{#1}}
\definecolor{new-green}{HTML}{69ff91}

\definecolor{id-arrow-color}{HTML}{03befc}

\usepackage{tcolorbox}
\tcbuselibrary{skins}

\definecolor{lightgreen}{RGB}{210,255,170}
\definecolor{lightpink}{RGB}{255,180,255}

\newtcbox{\greenbox}{
  enhanced,
  nobeforeafter,
  tcbox raise base,
  boxrule=0pt,
  interior style={lightgreen},
  frame hidden,
  colback=lightgreen,
  sharp corners,
}

\newtcbox{\pinkbox}{
  enhanced,
  nobeforeafter,
  tcbox raise base,
  boxrule=0pt,
  interior style={lightpink},
  frame hidden,
  colback=lightpink,
  sharp corners,
}

\makeatletter
\newcommand*{\relrelbarsep}{.386ex}
\newcommand*{\relrelbar}{%
  \mathrel{%
    \mathpalette\@relrelbar\relrelbarsep
  }%
}
\newcommand*{\@relrelbar}[2]{%
  \raise#2\hbox to 0pt{$\m@th#1\relbar$\hss}%
  \lower#2\hbox{$\m@th#1\relbar$}%
}
\providecommand*{\rightrightarrowsfill@}{%
  \arrowfill@\relrelbar\relrelbar\rightrightarrows
}
\providecommand*{\leftleftarrowsfill@}{%
  \arrowfill@\leftleftarrows\relrelbar\relrelbar
}
\providecommand*{\xrightrightarrows}[2][]{%
  \ext@arrow 0359\rightrightarrowsfill@{#1}{#2}%
}
\providecommand*{\xleftleftarrows}[2][]{%
  \ext@arrow 3095\leftleftarrowsfill@{#1}{#2}%
}
\makeatother

\newcommand\iC{\mathsf{i}} 
\newcommand\sC{\partial^-} 
\newcommand\tC{\partial^+} 

\begin{document}

\parindent0pt
\parskip1ex

\title{Aggregating time-series and image data: functors and double functors}

\author{Joscha Diehl\\
University of Greifswald}

\maketitle

\begin{abstract}

    Aggregation of time-series or image data over subsets of the domain
    is a fundamental task in data science.
    We show that many known aggregation operations can be interpreted as
    (double) functors on appropriate (double) categories.
    Such functorial aggregations are amenable to parallel implementation
    via straightforward extensions of Blelloch's parallel scan algorithm.
    In addition to providing a unified viewpoint on existing operations,
    it allows us to propose new aggregation operations for time-series
    and image data.

\end{abstract}

\tableofcontents

\section{Introduction}

After introducing basic concepts from category theory,
in \Cref{ss:categories_functors}
we formulate a large class of aggregation operations 
as functors on a certain category of intervals $\INT$, \Cref{ss:aggregation_as_functor}.
An essential ingredient is a certain, well-known, freeness
of $\INT$, \Cref{thm:free_category_over_quiver}.
Although this viewpoint might be folklore,
we observe that all aggregation functors
currently being used correspond to single-object
target categories. We sketch a possible relaxation in
\Cref{ex:multi_object_target}.
We recall Blelloch's prefix scan in \Cref{ss:parallel_scan_1}
and observe that it is easily useable for general
aggregation functors.

We recall basic concepts from the theory
of double categories in \Cref{ss:double_categories_double_functors}.
We introduce a class of aggregation
operators for planar data,
namely double functors on a certain category
of rectangles, \Cref{ss:aggregation_as_double_functor}
This category of rectangles is free in a certain sense,
\Cref{thm:free_double_category}.
Blelloch's scan is applicable ``slice-wise''
to these double functors, \Cref{ss:parallel_scan_2}.

\section{One parameter: sequential data}


Let sequential data $x_0, x_1, \ldots, x_{n-1}$ be given,
for example real-valued $x_k \in \R$.
For reasons that will become clear soon, we think
of this data as attached to the \emph{intervals}
$[0,1],[1,2],\ldots,[n-1,n]$.%
\footnote{
To be more precise, the data is attached to the \emph{boundaries}
of intervals and data attached to the intervals themselves
is obtained by some expression in the data attached to the boundaries.}

A common task in data science is to
``aggregate'' or ``summarize''
such (local) data on a larger intervals,
say $[0,n]$.
In the case of real-valued data, simple examples are
the sum and the max
\begin{align*}
    \sum_{k=0}^{n-1} x_k, \qquad \max_{k=0,\dots,n-1} x_k.
\end{align*}
Though very simple, these operations are of great importance
\cite{zaheer2017deep}. 

Less trivial examples are \emph{affine state-space models},
which we illustrate in a simplified version
of the case considered in
Mamba \cite{gu2023mamba}.
\begin{align*}
    y_0     &= 1 \text{ (some given value, chosen as $1$ for simplicity),} \\
    y_{k+1} &= y_k + x_k y_k, \quad k \in [0,n-1].
\end{align*}
The aggregated value $y_n$ is then
\begin{align*}
    y_n = 1 + \sum_{k\ge1} \sum_{0 \le i_1 < \dots < i_k < n} x_{i_1} \cdots x_{i_k}.
\end{align*}

People acquainted with ODEs or differential geometry
might recognize the discretization of a linear ODE,
or path development, here.
This hints at another aggregation.
First, interpolate $x$ to a continuous curve $X$,
piecewise affine between the values of $x$ (see \Cref{fig:interpolation}).
\begin{figure}[h]
    \centering
    \includegraphics[width=0.7\textwidth]{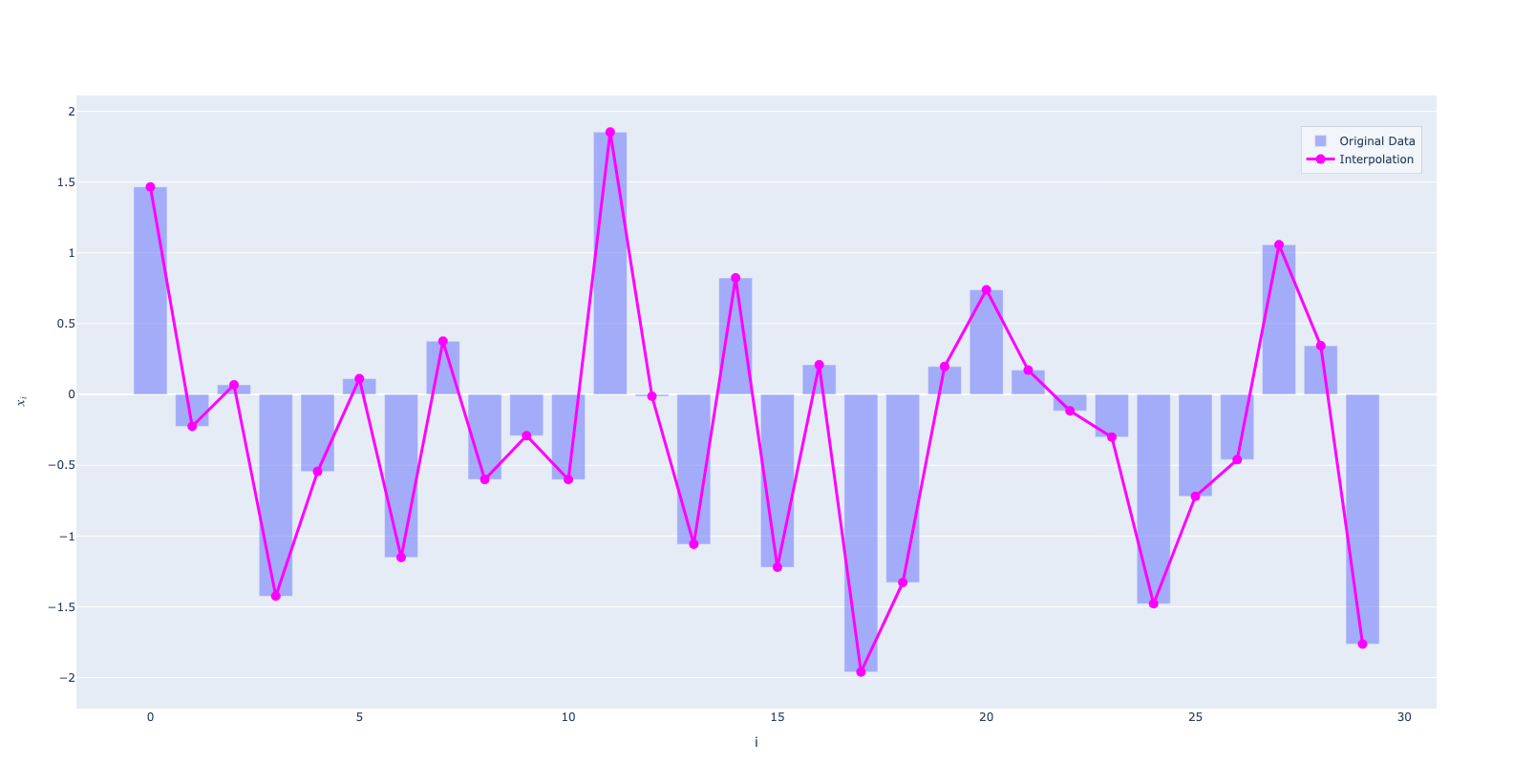}
    \caption{ 
        Interpolation of a time-series $x_0, x_1, \ldots$ to a continuous curve.
        The curve is piecewise affine between the values of $x$.
        }
    \label{fig:interpolation}
\end{figure}

Then, solve the ODE%
\footnote{
It turns out that this geometric approach is
only interesting if $x_0, \dots, x_n \in \R^d$ is multi-dimensional.
We then write $X = (X^{(1)}, \dots, X^{(d)})$ for the $d$-dimensional
interpolated curve.
We can of course ``lift'' a one-dimensional time-series to higher 
dimensions, by taking non-linear expressions in it.}
\begin{align}
    \label{eq:ODE}
    \begin{split}
    \dot Y_t  &= \sum_{k=1}^d A_k Y_t \dot X^{(k)}_t \\
         Y_0  &= \id_e.
    \end{split}
\end{align}
Here, $A_i \in \R^{e\times e}$ are square matrices
(parameters of the model) and $Y$ evolves
in the space of matrices.

\renewcommand\REF{\magenta{ \textbf{REF} }}
The aggregated value over $[0,n]$ is then taken to be the terminal value $Y_n \in \R^{e\times e}$.
It turns out
(see for example \cite{magnus1954exponential},\cite[Section 7.4]{friz2010multidimensional}),
there is a simple description
of the solution, where we ``stay on 
the discrete grid'' and do not have to interpolate explictly,
namely
\begin{align*}
    Y_n
    =
    \exp\left( \sum_{k=1}^d A_k (x^{(k)}_{n-1} - x^{(k)}_{n-2}) \right)
    \dots
    \exp\left( \sum_{k=1}^d A_k (x^{(k)}_1 - x^{(k)}_{0}) \right).
\end{align*}

All the aggregations presented thus far are then of the
following abstract form (see \Cref{fig:aggregation} for a visualization):
\begin{enumerate}

    \item Fix a semigroup $(\mathcal A, \bullet)$,
          i.e. a set $\mathcal A$ is endowed with an associative product $\bullet$.

    \item Map the real-valued time-series to a sequence of elements of $a_0, \dots, a_n \in \mathcal A$.

    \item Aggregate to the value
    \begin{align*}
        a_0 \bullet \dots \bullet a_n \in \mathcal A.
    \end{align*}

\end{enumerate}

\begin{figure}
    \centering
    \begin{tikzpicture}[
        node distance=2.7cm,
        line width=1.2pt,
        font=\footnotesize,
        arrows={-Latex} 
      ]
        \node[circle,fill=gray!10,inner sep=1pt] (0) at (0,0) {0};
        \node[circle,fill=gray!10,inner sep=1pt] (1) at (2,0) {1};
        \node[circle,fill=gray!10,inner sep=1pt] (2) at (4,0) {2};
        \node[circle,fill=gray!10,inner sep=1pt] (3) at (6,0) {3};
        \node[circle,fill=gray!10,inner sep=1pt] (4) at (8,0) {4};
        \draw[chartjs-blue] (0) -- (1);
        \draw[chartjs-blue] (1) -- (2);
        \draw[chartjs-blue] (2) -- (3);
        \draw[chartjs-blue] (3) -- (4);
        \draw[chartjs-blue,dotted] (4) -- ($(4)+(1,0)$);
        
        \node (a0) at ($(0)!0.5!(1)+(0,1)$) {$a_0$};
        \node (a1) at ($(1)!0.5!(2)+(0,1)$) {$a_1$};
        \node (a2) at ($(2)!0.5!(3)+(0,1)$) {$a_2$};
        \node (a3) at ($(3)!0.5!(4)+(0,1)$) {$a_3$};
        
        \draw[|->, dotted] ($(0)!0.5!(1)+(0,0.2)$) -- (a0);
        \draw[|->, dotted] ($(1)!0.5!(2)+(0,0.2)$) -- (a1);
        \draw[|->, dotted] ($(2)!0.5!(3)+(0,0.2)$) -- (a2);
        \draw[|->, dotted] ($(3)!0.5!(4)+(0,0.2)$) -- (a3);
    
        \draw[decorate, decoration={brace, amplitude=10pt}, line width=1.5pt, -{}] 
            ($(a0)+(-.5,.3)$) -- ($(a3)+(.5,.3)$) 
            node[midway, above=8pt] {$a_0 \bullet a_1 \bullet a_2 \bullet a_3$};
    \end{tikzpicture}
    \caption{
        Aggregation of a time-series $x_0, x_1, \ldots, x_{n-1}$ to $a_0 \bullet a_1 \bullet a_2 \bullet a_3$.
        The dotted arrows indicate the mapping from the time-series to the elements of the semigroup.
        }
    \label{fig:aggregation}
\end{figure}
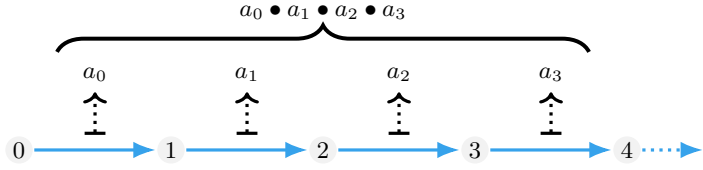

\begin{example}
\label{ex:all_aggregations}
The examples above fit in as follows:
\begin{enumerate}

    \item Take the semigroup $(\R,+)$ (which is actually a group)
            and take $a_i = x_i$. Then the aggregation yields
            \begin{align*}
                \sum_{i=0}^{n-1} x_i
            \end{align*}

    \item Take the semigroup $(\R,\max)$
            and take $a_i = x_i$. Then the aggregation yields
            \begin{align*}
                \max_{i=0,\dots,n-1} x_i
            \end{align*}

    \item
    \label{ex:all_aggregations:GL}
     Take the semigroup $(\GL_e(\R),\cdot)$ of invertible matrices
          with the usual matrix-product (this is actually a group)
          and take
          \begin{align*}
            a_0 &= \id_e \\
            a_. &= \exp\left( \sum_k A_k (x^{(k)}_i - x^{(k)}_{i-1}) \right).
          \end{align*}
          Then, the aggregation yields the terminal value
          of the solution to \eqref{eq:ODE}.

    \item Take the monoid $(\R,\cdot)$ of real numbers with multiplication,
          $a_i = 1 + x_i$.
          Then the aggregation yields
          \begin{align*}
                (1+x_0) \cdot \dots \cdot (1+x_{n-1}) = 1 + \sum_{k\ge1} \sum_{0 \le i_1 < \dots < i_k < n} x_{i_1} \cdots x_{i_k}.
          \end{align*}
                
\end{enumerate}

We mention two more example:
\newcommand\w[1]{\cyan{\mathtt{#1}}}
\begin{enumerate}

    \setcounter{enumi}{4}
    \item  

       Take
       \begin{align*}
          a_i := \sum_{n \ge 0} (x_i)^n [\w{1}^n],
       \end{align*}
       a formal sum, with real coefficients, 
       of the symbols $\mathcal A := \{ [\w{1}^n] \mid n \in \N_{\ge 0} \}$.
       Then, the aggregation
       \begin{align*}
        a_0 \otimes \dots \otimes a_n,
       \end{align*}
       calculated in $T((\mathcal A))$, the (completed) tensor algebra over $\mathcal A$,
       yields the \emph{iterated-sums signature} \cite{diehl2020time,DEFT22}.
       We give a few of the terms appearing:
       \begin{align*}
            a_0 \otimes \dots \otimes a_n
            &=
            1
            +
            \sum_{0 \le i \le n} x_i [\w{1}]
            +
            \sum_{0 \le i_1 < i_2 \le n} x_{i_1} x_{i_2} [\w{1}][\w{1}] \\
            &\qquad
            +
            \sum_{0 \le i \le n} (x_i)^2 [\w{1}^2]
            +
            \sum_{0 \le i_1 < i_2 \le n} (x_{i_1})^2 x_{i_2} [\w{1}^2][\w{1}]
            +
            \dots
       \end{align*}

    \item 
       Take
       \begin{align*}
          a_i := \exp( x_{i+1} - x_i ),
       \end{align*}
       as a formal exponential in the (completed) tensor algebra $T((\R))$.
       Then, the aggregation
       \begin{align*}
            a_0 \otimes \dots \otimes a_n
       \end{align*}
       yields the \emph{iterated-integrals signature} (\cite{chen1954iterated,lyons2007differential})
       of the piecewise-linearly interpolated curve corresponding to $x$
       (see \Cref{fig:interpolation}).%
       \footnote{
        We note that,
        different from the iterated-sums signature,
        the iterated-integrals signature
        is very degenerate for one-dimensional data,
        so the reader is invited to actually have $x_i \in \R^q$ in mind.
       }

\end{enumerate}
    
\end{example}

Associativity of the operation is important for at least two related reasons:
\begin{itemize}
    \item the aggregation should not depend on the order in which the products are evaluated, 
    \item the aggregation should be parallelizable (see \Cref{ss:parallel_scan_1}).
\end{itemize}

What turns out \emph{not} to be essential,
is the fact that the operation is defined on only \emph{one} set.
This leads us to the generalization of \Cref{ss:aggregation_as_functor}.

\subsection{Categories, functors and free categories}
\label{ss:categories_functors}

\begin{quote}
    \itshape
    {
    A \emph{category} is a set of ``objects'' and a set of ``morphisms'' between them
    such that the morphisms can be composed and that there are identity morphisms.
    A \emph{functor} is a structure-preserving map between categories.
    }
\end{quote}

A \DEF{category}%
\footnote{Recommended references for category theory are: \cite{mac2013categories,awodey2010category}.}
$\CC$ consists of
\begin{itemize}
  \item a set of \DEF{objects} $\CC_0$,
  \item a set of \DEF{morphisms} $\CC_1$ (also called \DEF{arrows})
  \item
    a map $\iC_\CC: \CC_0 \to \CC_1$ assigning to each object $X \in \CC_0$ an \DEF{identity morphism} $\iC_\CC(X) \in \CC_1$,

  \item \DEF{source} and \DEF{target} maps $\sC_\CC, \tC_\CC: \CC_1 \to \CC_0$,

  \item
    for all objects $X,Y,Z \in \CC_0$ and all $f,g \in \CC_1$
    with $\tC_\CC(f) = \sC_\CC(g)$ (we call such $f,g$ \DEF{composable}),
    a morphism
    $f \COMP_\CC g \in \CC_1$
    \footnote{Often written as $g \circ f$ or $gf$, but we prefer this ``diagrammatic'' notation.}
    the \DEF{composition} of $f$ and $g$,

\end{itemize}
such that the following hold
\begin{itemize}
\item 
for composable morphisms $f,g \in \CC_1$,
\begin{align*}
    \sC_\CC(f \COMP_\CC g) = \sC_\CC(f) \quad \text{ and } \tC_\CC(f \COMP_\CC g) = \tC_\CC(g),
\end{align*}
i.e. the source of composition is the source of the first map,
the target is the target of the second map,

\item
\DEF{associativity} holds for all composable morphisms $f,g,h \in \CC_1$
\begin{align*}
    f \COMP_\CC (g \COMP_\CC h) = (f \COMP_\CC g) \COMP_\CC h,
\end{align*}

\item
the \DEF{identity} morphisms behave as expected,
namely for any $f \in \CC_1$,
\begin{align*}
    \iC_\CC(\sC_\CC(f)) \COMP_\CC f = f, \quad f \COMP_\CC \iC_\CC(\tC_\CC(f)) = f.
\end{align*}

\end{itemize}

\begin{figure}
    \centering
    \begin{tikzpicture}[
        point/.style={circle, fill=black, inner sep=2pt},
        arrow/.style={-Stealth, thick},
        map/.style={->, dashed, thick},
        >=Stealth
    ]
        \node[align=center] at (0,2) {\textbf{Structures in a category $\mathcal{C}$}};
        
        \node[point] (X) at (-3,0) {};
        \node[point] (Y) at (0,0) {};
        \node[point] (Z) at (3,0) {};
        
        \draw[arrow] (X) to[bend left=40] node[midway, above] {$f$} (Y);
        \draw[arrow] (Y) to[bend left=40] node[midway, above] {$g$} (Z);
        \draw[arrow] (X) to[bend right=40] node[midway, below] {$f \COMP g$} (Z);
        
        \draw[arrow, id-arrow-color] (X) to[loop left] node[left] {\scalebox{0.7}{$\i_\mathcal{C}(X)$}} (X);
        \draw[arrow, id-arrow-color] (Y) to[loop left] node[left] {\scalebox{0.7}{$\i_\mathcal{C}(Y)$}} (Y);
        \draw[arrow, id-arrow-color] (Z) to[loop left] node[left] {\scalebox{0.7}{$\i_\mathcal{C}(Z)$}} (Z);

        \node[fill=white, fill opacity=0.7, inner sep=2pt, anchor=north] at (X.south) {\scalebox{0.7}{$X=\sC_\CC(f)$}};
        \node[fill=white, fill opacity=0.7, inner sep=2pt, anchor=north] at (Y.south) {\scalebox{0.7}{$\tC_\CC(f)=Y=\sC_\CC(g)$}};
        \node[fill=white, fill opacity=0.7, inner sep=2pt, anchor=north] at (Z.south) {\scalebox{0.7}{$Z=\tC_\CC(g)$}};
    \end{tikzpicture}
    \caption{Visualization of a category $\mathcal{C}$, showing objects (points), morphisms (arrows), 
    identity morphisms (blue loops), the source and target maps as well as composition.}
\end{figure}
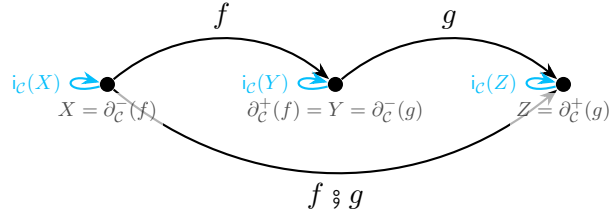

\begin{remark}~

  \begin{enumerate}

  \item 
    We speak of \emph{sets}, of objects for example, although in general,
    the objects and morphisms of a category are not sets. (They are often ``too large'' to be sets.)

  \item
    An alternative, and more common, definition of a category is to define
    it as a collection of objects $\obj(\CC)$ 
    and, for every $X,Y \in \obj(\CC)$ a collection of morphisms $\hom_\CC(X,Y)$
    with a distinguished idenity morphism $\id_X \in \hom_\CC(X,X)$ for every object $X$,
    as well as as an appropriate composition of morphisms.

    The translation between the two definitions is as follows:
    \begin{align*}
        \obj(\CC) &\leftrightarrow \CC_0 \\
        \hom_\CC(X,Y) &\leftrightarrow \{ f \in \CC_1 \mid \sC_\CC(f) = X, \tC_\CC(f) = Y \} \\
        \id_X &\leftrightarrow i_\CC(X).
    \end{align*}
    The definition we gave first generalizes more easily to 
    ``higher-dimensional categories'', \Cref{sec:two-param}.
    We will use both conventions interchangeably.

  \item
    We also write $f: X \to Y$ if $f \in \hom_\CC(X,Y)$, even though
    \begin{center}
      \underline{$X,Y$ are, in general, not sets and $f$ is not a map between sets}.
    \end{center}
  \end{enumerate}

\end{remark}

\begin{example}~
    \label{ex:categories}

    \begin{enumerate}

        \item
        We define the category $\INT$  ``of intervals with endpoints in $\Z$'' as follows:
        \footnote{For experts: this is just the poset $\Z$ considered as a category.}
        \begin{align*}
            \CC_0 = \Z, \quad \CC_1 = \{ [m,n] \mid m, n \in \Z, m \le n \}, \\
            \sC_{\CC}([m,n]) = m, \quad \tC_{\CC}([m,n]) = n, \\
            \iC_{\CC}(m) = [m,m], \quad [m,n] \COMP [n,p] = [m,p].
        \end{align*}
        See \Cref{fig:int-category} for a visualization.

        This category will form the \emph{domain} of our functors built from
        sequential data.

        \begin{figure}[h]
            \centering 
            \begin{tikzpicture}[
                scale=1.2,
                node distance=2cm,
                thick,
                point/.style={circle, draw, minimum size=0.6cm, inner sep=1pt, font=\normalsize},
                arrow/.style={-Stealth, thick},
                interval/.style={-Stealth, thick}
            ]
                \foreach \x in {0,...,4}{
                    \node[point] (p\x) at (\x*1.5,0) {$\x$};
                }
                
                \foreach \x in {0,...,4}{
                    \draw[arrow, id-arrow-color] (p\x) to[loop above] node[above, font=\small] {$[\x,\x]$} (p\x);
                }
                
                \draw[interval] (p0) to node[above, font=\small] {$[0,1]$} (p1);
                \draw[interval] (p1) to node[above, font=\small] {$[1,2]$} (p2);
                \draw[interval] (p2) to node[above, font=\small] {$[2,3]$} (p3);
                \draw[interval] (p3) to node[above, font=\small] {$[3,4]$} (p4);
                
                \draw[interval, bend right=20] (p0) to node[fill=white, fill opacity=0.7, font=\small] {$[0,2]$} (p2);
                \draw[interval, bend right=20] (p1) to node[fill=white, fill opacity=0.7, font=\small] {$[1,3]$} (p3);
                \draw[interval, bend right=20] (p2) to node[fill=white, fill opacity=0.7, font=\small] {$[2,4]$} (p4);
                
                \draw[interval, bend right=30] (p0) to node[fill=white, fill opacity=0.7, font=\small] {$[0,3]$} (p3);
                \draw[interval, bend right=30] (p1) to node[fill=white, fill opacity=0.7, font=\small] {$[1,4]$} (p4);
                
                \draw[interval, bend right=40] (p0) to node[fill=white, fill opacity=0.7, font=\small] {$[0,4]$} (p4);
                
            \end{tikzpicture}
            \caption{A subset of the category $\INT$ showing all intervals with endpoints in $\{0,1,2,3,4\}$.}
            \label{fig:int-category}
        \end{figure}
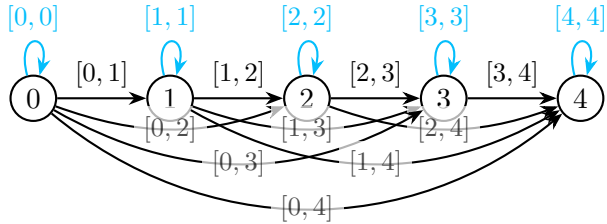

        \item
        \label{ex:categories:delooping}
        The \DEF{delooping} of a monoid, $M$
        \begin{align*}
            \obj(\B M)       &:= \{*\} \\
            \hom_{\B M}(*,*) &:= M,
        \end{align*}
        with composition given by multiplication in $M$, see \Cref{fig:delooping}.
        This category will usually form the \emph{codomain} of our functors built from
        sequential data.
        \begin{figure}[h]
            \centering 
            \begin{tikzpicture}[
                arrow/.style={-Stealth, thick},
                node distance=3cm,
                thick,
                main node/.style={circle, minimum size=1cm, font=\Large}
            ]
                \node[main node] (star) {$*$};
                
                \draw[->, out=150, in=110, looseness=5, id-arrow-color] (star) to node[pos=0.5, fill=white, fill opacity=0.7] {$e$} (star);
                \draw[->, out=150, in=110, looseness=9] (star) to node[above, pos=0.5] {$m$} (star);
                \draw[->, out=150, in=110, looseness=15] (star) to node[above, pos=0.5] {$m'$} (star);
            \end{tikzpicture}
            \caption{The delooping of a monoid $M$ is a category with one object $*$ and morphisms corresponding to the elements of $M$.}
            \label{fig:delooping}
        \end{figure}
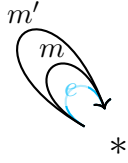

    \end{enumerate}
    
\end{example}


A (covariant) \DEF{functor} $F$ between categories $\CC, \DD$,
written $F: \CC \to \DD$ consists of two maps $F_0: \CC_0 \to \DD_0$ and $F_1: \CC_1 \to \DD_1$,
such that
\begin{itemize}
    \item 
    $\iC_\DD(F_0(X)) = F_1(\iC_\CC(X))$ for every object $X \in \CC_0$,
    
    \item 
    $\sC_\DD(F_1(f)) = F_0(\sC_\CC(f))$ and $\tC_\DD(F_1(f)) = F_0(\tC_\CC(f))$ for every morphism $f \in \CC_1$,

    \item
    $F_1( f \COMP_\CC g ) = F_1(f) \COMP_\DD F_1(g)$ for all composable morphisms $f,g \in \CC_1$.
\end{itemize}

\begin{notation}
    \label{not:functor}
    We sometimes write $F = (F_0,F_1)$ to emphasize the two components of a functor.
\end{notation}


\begin{figure}
    \centering

    \begin{tikzpicture}[
        point/.style={circle, fill=black, inner sep=2pt},
        arrow/.style={-Stealth, thick},
        functor/.style={-Stealth, thick, magenta, dashed, bend left=15},
        >=Stealth
    ]
        \begin{scope}[shift={(-4,0)}]
            \node at (0,3.5) {\textbf{Category $\mathcal{C}$}};
            
            \node[point, label=below:{$A$}] (A) at (-1,0) {};
            \node[point, label=below:{$B$}] (B) at (1,0) {};
            \node[point, label=above:{$C$}] (C) at (0,1.5) {};
            
            \draw[arrow] (A) to node[below] {$f$} (B);
            \draw[arrow] (A) to node[left] {$h$} (C);
            \draw[arrow] (B) to node[right] {$g$} (C);
            
            
            \node[align=center] at (0,-1.5) {$f \COMP g = h$ (composition)};
        \end{scope}
        
        \begin{scope}[shift={(4,0)}]
            \node at (0,3.5) {\textbf{Category $\mathcal{D}$}};
            
            \node[point, label=left:{$F(A)$}] (FA) at (-1.5,1) {};
            \node[point, label=right:{$F(B)$}] (FB) at (1.5,1) {};
            \node[point, label=below:{$F(C)$}] (FC) at (0,0) {};
            \node[point, label=above:{$D$}] (D) at (0,2) {};  
            
            \draw[arrow] (FA) to node[below] {$F(f)$} (FB);
            \draw[arrow] (FA) to node[left] {$F(h)$} (FC);
            \draw[arrow] (FB) to node[right] {$F(g)$} (FC);
            \draw[arrow] (FA) to node[left] {$j$} (D);
            \draw[arrow] (FB) to node[right] {$k$} (D);
            
            \node[align=center] at (0,-1.5) {$F(h) = F(f \COMP g) = F(f) \COMP F(g)$};
        \end{scope}
        
        \draw[functor] (-3,2.5) to node[midway, above] {Functor $F: \mathcal{C} \rightarrow \mathcal{D}$} (3,2.5);
    \end{tikzpicture}
    
    \caption{A functor $F$ between two categories $\mathcal{C}$ and $\mathcal{D}$. The functor preserves the structure of the categories, including composition of morphisms.
    For legibility, the identity morphisms are not shown.}
\end{figure}
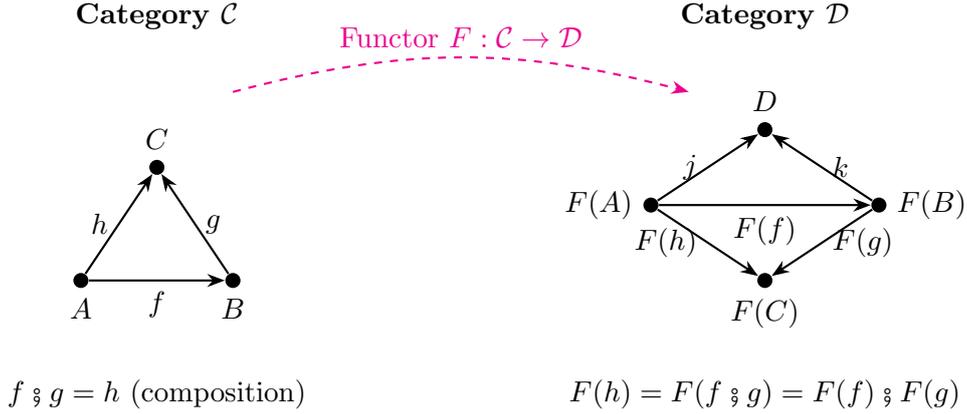

\begin{example}~
    \label{ex:functors}

    \begin{enumerate}
        \item
            Let $M,N$ be two monoids and $\B M, \B N$ their deloopings (\Cref{ex:categories},
                        \ref{ex:categories:delooping}).
            Then, functors $F: \B M\to \B N$ are in one-to-one correspondence
            to monoid homomorphisms $f: M \to N$.

        \item
            \label{ex:functors:sum}
            Let $x_i \in \R$, $i \in \Z$, be a time-series.
            Let $\B \R$ be the delooping of the additive group $\R$.
            Define
            \begin{align*}
                F( m )     &:= * \\
                F( [m,n] ) &:= \sum_{k=m}^{n-1} x_k, \quad m \le n.
            \end{align*}
            Then, $F$ is a functor $\INT \to \B \R$.

        \item
        \label{ex:functors:monoid}
        More generally, let $M$ be a monoid and $a_i \in M$, $i \in \Z$.
        (For example $M = \GL_e(\R)$ and $a_i = \exp\left( \sum_i A_i (x^{(i)}_i - x^{(i)}_{i-1}) \right)$.)
        Let $\B M$ be the delooping of $M$.
        Define
        \begin{align*}
            F( m )     &:= * \\
            F( [m,n] ) &:= a_m \bullet \dots \bullet a_{n-1}, \quad m \le n.
        \end{align*}
        This covers all examples in \Cref{ex:all_aggregations}.

    \end{enumerate}
\end{example}

The fact that \Cref{ex:functors}, \ref{ex:functors:monoid} is a functor
can be easily verified directly.
Instead, we prove that $\INT$ is a certain free category,
which gives us a simple way to describe \emph{all} functors out of $\INT$.

Recall from linear algebra that any linear map out of an $\R$-vector space $V$
(which is a \emph{free module} over $\R$)
is uniquely specified by
an (arbitrary) assignment of values
to a basis of $V$.
Analogously, a functor out of a \emph{free category}
will be uniquely determined by an assignment 
on a certain substructure of the category.
The relevant substructure for free modules are bases;
for free categories they are quivers.

A \DEF{quiver}\footnote{A ``directed graph with multi-edges'', also called $1$-polygraph \cite[Chapter 1]{ararewriting}.}
consists of a set of \DEF{vertices} $Q_0$ and a set of \DEF{arrows} $Q_1$,
together with two maps (\DEF{source} and \DEF{target})
\begin{align*}
    Q_1 \xrightrightarrows[\tC_Q]{\sC_Q} Q_0
\end{align*}

A \DEF{morphism} $f: Q \to Q'$ of quivers is a pair of maps $f_0: Q_0 \to Q'_0$ and $f_1: Q_1 \to Q'_1$
that are compatible with the source and target maps:
\begin{align*}
    f_1 \COMP \sC_{Q'} = \sC_Q \COMP f_0, \quad f_1 \COMP \tC_{Q'} = \tC_Q \COMP f_0.
\end{align*}
Their composition is defined componentwise.
This turns the class of quivers into a category $\QUIVER$.
Consider the ``forgetful'' functor
\begin{align*}
    \FCQ
    : \CAT \to \QUIVER,
\end{align*}
on the category of (small\footnote{Again, we do not dwell on cardinality issues.}) categories,
which forgets the identity map and the composition that a category possess.
%
The following result is well-known.
\begin{theorem}[Free category over a quiver%
        \footnote{\cite[Chapter 3]{higgins1971categories},
        \cite[p.49]{mac2013categories},
        \cite[2.6.16]{barr1990category},
        \cite[p.532]{street1996categorical},
        \cite[Lemma 2.1.3]{ararewriting}}
    ]
	\label{thm:free_category_over_quiver}
	Let
	\begin{align*}
        Q_1 \xrightrightarrows[\tC_Q]{\sC_Q} Q_0
	\end{align*}
	be a quiver.
	Then there exists a category $\FQC(Q)$
	and a map of quivers $\iota: Q \to \FCQ( \FQC(Q) )$
	which is \emph{free} on $Q$ in the following sense:
	for every category $\mathcal D$ and every map $f: Q \to \FCQ( \mathcal D)$
	of quivers,
    there exists a unique functor $F: \FQC(Q) \to \mathcal D$
	satisfying $f = \iota \COMP \FCQ(F)$,
    which, with abuse of notation (by omitting the $\FCQ$-functor), is visualized as follows
	\begin{center}
		\begin{tikzcd}
		Q \arrow[d, "\iota"] \arrow[r, "f"]    & \mathcal D \\
		\FQC(Q) \arrow[ru, "\exists ! F"', dotted] &           
		\end{tikzcd}
	\end{center}

	Moreover, $\FQC: \QUIVER \to \CAT$ is a functor.%
    \footnote{
        Moreover, the association $f \mapsto F$ is \emph{natural} in $Q$ and $\mathcal D$;
        which means that $\FCQ$ is \emph{left adjoint} to $\FQC$.}
\end{theorem}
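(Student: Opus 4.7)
The plan is to construct $\FQC(Q)$ explicitly as the \emph{path category} of $Q$, then verify the universal property, and finally upgrade the construction to a functor $\QUIVER \to \CAT$.

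\textbf{Construction of $\FQC(Q)$.} Set $\FQC(Q)_0 := Q_0$. For the morphisms, take
\begin{align*}
    \FQC(Q)_1 := \{ (X) \mid X \in Q_0 \} \;\sqcup\; \bigsqcup_{n \ge 1} \{ (a_1, \dots, a_n) \in Q_1^n \mid \tC_Q(a_i) = \sC_Q(a_{i+1}), \; 1 \le i < n \},
\end{align*}
i.e.\ all finite composable strings of arrows of $Q$, together with an ``empty path'' $(X)$ at each vertex. Define $\sC(X) = \tC(X) = X$ for empty paths, and $\sC(a_1,\dots,a_n) = \sC_Q(a_1)$, $\tC(a_1,\dots,a_n) = \tC_Q(a_n)$ for nonempty ones. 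The identity at $X$ is $\iC(X) := (X)$. Composition is concatenation, with $(X)$ acting as a left/right unit. Associativity is immediate from concatenation of tuples, and the identity laws hold by construction. Define $\iota: Q \to \FCQ(\FQC(Q))$ by $\iota_0 = \id_{Q_0}$ and $\iota_1(a) := (a)$; this is a quiver morphism.

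\textbf{Universal property.} Given any category $\DD$ and quiver map $f = (f_0,f_1): Q \to \FCQ(\DD)$, the candidate functor $F: \FQC(Q) \to \DD$ is forced to be
\begin{align*}
    F_0(X) := f_0(X), \qquad F_1(X) := \iC_\DD(f_0(X)), \qquad F_1(a_1,\dots,a_n) := f_1(a_1) \COMP_\DD \cdots \COMP_\DD f_1(a_n),
\end{align*}
where the right-hand composition is well-defined because $f$ respects source and target. Checking that this $F$ is a functor amounts to: source/target compatibility (immediate from the formulas), preservation of identities (by definition of $F_1(X)$), and preservation of composition, which reduces to associativity of $\COMP_\DD$ together with the identity axioms of $\DD$ for the cases where one of the factors is an empty path. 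The equation $f = \iota \COMP \FCQ(F)$ holds because $F_1((a)) = f_1(a)$.

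\textbf{Uniqueness.} Suppose $G: \FQC(Q) \to \DD$ also satisfies $f = \iota \COMP \FCQ(G)$. Then $G_0 = f_0$ on objects, $G_1((a)) = f_1(a)$ on length-one paths, and $G_1((X)) = \iC_\DD(G_0(X)) = \iC_\DD(f_0(X))$ by the identity axiom. Every longer path factors as $(a_1) \COMP \cdots \COMP (a_n)$ in $\FQC(Q)$, so functoriality of $G$ forces the same formula as for $F$. Hence $G = F$.

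\textbf{Functoriality of $\FQC$.} A quiver morphism $\varphi: Q \to Q'$ composed with $\iota_{Q'}: Q' \to \FCQ(\FQC(Q'))$ yields $\iota_{Q'} \COMP \varphi: Q \to \FCQ(\FQC(Q'))$, so the universal property applied to $\DD := \FQC(Q')$ produces a unique functor $\FQC(\varphi): \FQC(Q) \to \FQC(Q')$. Functoriality of $\FQC$ (preservation of identities and composition of quiver maps) follows from the uniqueness clause: both $\FQC(\psi \COMP \varphi)$ and $\FQC(\psi) \COMP \FQC(\varphi)$ satisfy the same universal characterization, hence agree; similarly $\FQC(\id_Q) = \id_{\FQC(Q)}$.

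The whole argument is essentially bookkeeping; the only mild subtlety is the separate treatment of empty paths as identities, which is forced on us by the functoriality requirement $F_1(\iC_\CC(X)) = \iC_\DD(F_0(X))$ and is where a naive ``just take all nonempty words'' construction would fail.
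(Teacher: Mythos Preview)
Your proof is correct and follows essentially the same route as the paper: explicit construction of the path category, then verification of the universal property by defining $F$ on paths via iterated composition in $\DD$. You are in fact more thorough than the paper, which merely asserts that $\FQC$ is a functor, whereas you derive it from the uniqueness clause; one small slip there is the order of composition --- with the paper's diagrammatic convention $f \COMP g$ (first $f$, then $g$), the composite should read $\varphi \COMP \iota_{Q'}$ rather than $\iota_{Q'} \COMP \varphi$, and likewise $\FQC(\varphi \COMP \psi) = \FQC(\varphi) \COMP \FQC(\psi)$.
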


\begin{example}



		
		Let $Q_1 = \{ *\rightarrow *\}, Q_0 = \{*\}$.
		Then $\FQC(Q)$ is
		the
        one-object category with the
        free monoid on one generator (i.e. $(\N_0,+)$)
        as its set of arrows.

	
\end{example}

\begin{proof}

	Let $P^{[n]}$ be the quiver with $n+1$ vertices $\{0,1,\dots,n\}$ and $n$ arrows $0 \to 1 \to \dots \to n$.
	A \textbf{directed path of length $n$ from $i$ to $j$} in a quiver $Q$ is a map of quivers $P^{[n]} \to Q$
	with $0\mapsto i, n \mapsto j$. We call $i$ the source and $j$ the target of the path.
	In particular, there is a unique path of length $0$ from $i$ to $i$ for each $i \in Q_0$.
    We then set
    \begin{align*}
        \FQC(Q)_0 &:= Q_0, \\
        \FQC(Q)_1 &:= \{ \text{directed paths in $Q$} \},
    \end{align*}
    where the source and target of a directed path are as above.
    Composition in this category is defined to be concatenation of paths,
    which is clearly associative. The zero-length directed paths are the identities.


    It remains to show the universal property.
    Let $f: Q \to \FCQ( \mathcal D)$ be a map of quivers.
    Define for $x \in \FQC(Q)_0 = Q_0$ 
    \begin{align*}
        F_0(x) := f_0(x).
    \end{align*}

    For $p = (i=i_0 \to i_1 \to \dots i_n = j)$ a directed path in $Q$ from $i$ to $j$, define
    \begin{align*}
        F_1(p)
        &:= F_1( i_0 \to i_1 ) \COMP_{\mathcal D} F_1( i_1 \to i_2 ) \COMP_{\mathcal D} \dots \COMP_{\mathcal D} F_1( i_{n-1} \to i_n ) \\
        &:= f_1( i_0 \to i_1 ) \COMP_{\mathcal D} f_1( i_1 \to i_2 ) \COMP_{\mathcal D} \dots \COMP_{\mathcal D} f_1( i_{n-1} \to i_n ).
    \end{align*}
    It is clear that this is well-defined and that $F_1$ is a functor.
    Moreover, by functoriality it is uniquely specified by its value on the generators
    (i.e. paths of length $1$) where it coincides with $f_1$.
    This gives uniqueness of $F$.
\end{proof}

\subsection{Aggregation as a functor on a category of intervals}
\label{ss:aggregation_as_functor}
\begin{quote}
    \itshape
    {
    Many existing aggregation operations can be interpreted as functors on a category of intervals.
    Moreover, new aggregation operations can be defined in this way.
    }
\end{quote}

\newcommand\EI{E}
    Consider the following quiver of ``elementary intervals with endpoints in $\Z$'':
    \begin{align*}
        \EI_0 &= \Z, \\
        \EI_1 &= \{ [n,n+1] \mid n \in \Z \}, \\
        \sC_\EI([n,n+1]) &= n, \quad \tC_\EI([n,n+1]) = n+1.
    \end{align*}

    Then, looking at the proof of \Cref{thm:free_category_over_quiver},
    we see that (a copy of) $\FQC(\EI)$ is given by $\INT$.
    
    \bigskip
    This gives us the following way to aggregate values attached
    to elementary intervals.
    Let $\D$ be any category,
    and let $f: \EI \to \D$ be a map of quivers.
    This means 
    \begin{align*}
        \sC_\D( f_1( [n,n+1] ) ) = f_0(n), \quad \tC_\D( f_1( [n,n+1] ) ) = f_0(n+1).
    \end{align*}
    In the case where $\D$ has only one object,
    for example when $\D = \B M$ is the delooping of a monoid $M$,
    then this condition is void.

    Then, there exists a unique ``lift'' of $f$ to a functor $F: \INT \to \D$.
    This means that there exist (unique) \underline{aggregated values}
    (see \Cref{fig:aggregation_in_a_category}
    for an example visualization))
    \begin{align*}
        F( [m,n] ) \in D_1, \qquad \forall m \le n.
    \end{align*}
    The special case of  a monoid $M$, $\D = \B M$ and $f([n,n+1]) = a_n \in M$
    results in the functor $F$ of \Cref{ex:functors}, \ref{ex:functors:monoid}.
    Although this result is easily obtainable ``by hand'' without
    the language of categories, it will extrapolate nicely to the
    two-parameter case, \Cref{sec:two-param}.

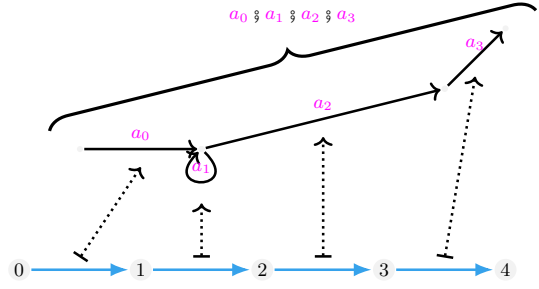
\begin{figure}
    \centering
    \scalebox{0.8}{
\begin{tikzpicture}[
    node distance=2.7cm,
    line width=1.2pt,
    font=\footnotesize,
    arrows={-Latex} 
  ]
    \node[circle,fill=gray!10,inner sep=1pt] (0) at (0,0) {0};
    \node[circle,fill=gray!10,inner sep=1pt] (1) at (2,0) {1};
    \node[circle,fill=gray!10,inner sep=1pt] (2) at (4,0) {2};
    \node[circle,fill=gray!10,inner sep=1pt] (3) at (6,0) {3};
    \node[circle,fill=gray!10,inner sep=1pt] (4) at (8,0) {4};
    \draw[chartjs-blue] (0) -- (1);
    \draw[chartjs-blue] (1) -- (2);
    \draw[chartjs-blue] (2) -- (3);
    \draw[chartjs-blue] (3) -- (4);
    
    \node[circle,fill=gray!10,inner sep=1pt] (a0start) at (1,2) {};
    \node[circle,fill=gray!10,inner sep=1pt] (a0end) at (3,2) {};
    \draw[->] (a0start) to node[above,aggregation-color] {$a_0$} (a0end);
    
    \node[circle,fill=gray!10,inner sep=1pt] (a1node) at (3,2) {};
    \draw[->] (a1node) to [out=-45,in=-135,looseness=24] node[above,aggregation-color] {$a_1$} (a1node);
    
    \node[circle,fill=gray!10,inner sep=1pt] (a2start) at (3,2) {};
    \node[circle,fill=gray!10,inner sep=1pt] (a2end) at (7,3) {};
    \draw[->] (a2start) to node[above,aggregation-color] {$a_2$} (a2end);
    
    \node[circle,fill=gray!10,inner sep=1pt] (a3start) at (7,3) {};
    \node[circle,fill=gray!10,inner sep=1pt] (a3end) at (8,4) {};
    \draw[->] (a3start) to node[above,aggregation-color] {$a_3$} (a3end);
    
    \draw[|->, dotted] ($(0)!0.5!(1)+(0,0.2)$) -- ($(a0start)!0.5!(a0end)+(0,-0.3)$);
    \draw[|->, dotted] ($(1)!0.5!(2)+(0,0.2)$) -- ($(a1node)+(-0.0,-0.9)$);
    \draw[|->, dotted] ($(2)!0.5!(3)+(0,0.2)$) -- ($(a2start)!0.5!(a2end)+(0,-0.3)$);
    \draw[|->, dotted] ($(3)!0.5!(4)+(0,0.2)$) -- ($(a3start)!0.5!(a3end)+(0,-0.3)$);

    \draw[decorate, decoration={brace, amplitude=10pt}, line width=1.5pt, -{}] 
        ($(a0start)+(-.5,.3)$) -- ($(a3end)+(.5,.3)$) 
        node[midway, above=18pt] {$\ac{a_0} \COMP \ac{a_1} \COMP \ac{a_2} \COMP \ac{a_3}$};
\end{tikzpicture}}
\caption{ Aggregation in a category }
\label{fig:aggregation_in_a_category}

\end{figure}

In applications to time-series data $x_i, i \in \Z$,
the values $f_1([n,n+1])$ will usually be obtained
as an expression in the values $x_n$ and $x_{n+1}$.
For example, to replicate
\ref{ex:all_aggregations},
\ref{ex:all_aggregations:GL}
we take $\DD$ to be the delooping of $\GL_e(\R)$
and $f_1([n,n+1])$ to be the matrix
\begin{align*}
    f_1([n,n+1]) = \exp\left( \sum_j A_i (x^{(j)}_n - x^{(j)}_{n-1}) \right).
\end{align*}

We observe that the case of target categories $\D$ with more than one object
seems not to be considered in the data science literature.
We present one example to show that it might be worth considering
nonetheless.

\newcommand\MAT{\underline{\mathbf{Mat}}}
\begin{example}
    \label{ex:multi_object_target}

    Consider $\D = \MAT$ the category of matrices
    over some semiring\footnote{
        A semiring is a ``ring without the necessity of subtraction''.
        Every ring is a semiring.
        The tropical semiring $(\R \cup \{ -\infty \}, \min, +)$
        is a (commutative) semiring that is not a ring.
     } $S$ (for example $S = \R$).
    \begin{align*}
        \obj(\MAT)       &:= \N, \\
        \hom_{\MAT}(m,n) &:= S^{n\times m},
    \end{align*}
    with $\id_n$ the identity matrix of size $n$
    and composition given by matrix multiplication.
    (We note that, in the case $S=\R$, the delooping of the group $\GL_n(\R)$ embeds into $\MAT$
     as a subset of the morphisms at the object $n$.)

     Let $x_i \in \R, i \in \Z$, some time-series.
     Assign to it some natural numbers $n_i \in \N$, $i \in \Z$.
     For example, we could take a local complexity measure,
     such as permutation entropy,
     of the time-series and map it to the natural numbers.
     Idea: ``the more complex the time-series is locally around $i$, the
     larger the $n_i$.''

     Moreover, assume a family of embedding maps
     \begin{align*}
        \phi_{n \times m}: \R \to S^{n\times m},
     \end{align*}
     is given, for example as neural networks.
    We then define a map of quivers
    \begin{align*}
        f: Q \to \FCQ( \MAT ) \\
        f_0(i) = n_i, \quad f_1([i,i+1]) = \phi_{n_{i+1}\times n_i}(x_n).
    \end{align*}
    Using the universal property of the free category,
    we obtain a unique functor $F: \INT \to \MAT$,
    which aggregates the $\phi_{n_{i+1}\times n_i}(x_n)$
    over larger intervals, and gives some elements
    \begin{align*}
        F([i,j]) \in S^{n_j \times n_i}.
    \end{align*}
    
    A remark for experts:
    this is a special case of a \emph{quiver representation} (\cite{schiffler2014quiver})
    of $\EI$.
\end{example}


\subsection{Parallel scan}
\label{ss:parallel_scan_1}
\begin{quote}
    \itshape
    {
    The parallel scan algorithm is a well-known algorithm
    for computing the prefix sum of a sequence of numbers.
    It can be generalized to the setting of \Cref{ss:aggregation_as_functor}.
    }
\end{quote}

Let $F: \INT \to \D$ be a functor.
Assuming constant cost (in time and memory) of composing morphisms
in $\D$,
the calculation of the value\footnote{The
choice of ``source object'' $0$ is just for notational convenience.}
\begin{align*}
    F( [0,n] ),
\end{align*}
for $0 \le n$ is possible
at time-cost $\O( n )$
and memory-cost $\O(1)$.
In fact one can get the entire sequence
\begin{align}
    \label{eq:Fmk}
    ( F([0,k]) )_{k \in [0,n]},
\end{align}
at (the same) time-cost $\O( n )$
and memory-cost $\O( n )$.
Indeed, $F([0,1]) = f([0,1])$ is given,
and for $k = [2,n]$
\begin{align*}
    F([0,k]) = F([0,k-1]) \COMP F([k-1,k]), 
\end{align*}
with the claimed time and memory cost.

Blelloch \cite{blelloch1990prefix} realized,
in the special case of an associative product
on a set (i.e. the special case of $\D$ having
just one object), that this procedure is 
parallelizable. Given $p$ parallel machines,
\eqref{eq:Fmk} is then calcuable 
at time-cost $\O( n/p + \log p )$
(while the memory cost stays at $\O(n)$).
This ``associative scan'' is part of most deep learning
frameworks as a differentiable operation.
In its simplest form it is the ``cumsum'' operation
for the associative operation of summing real numbers.
In \verb|JAX|, any associative operation can be used with
\verb|jax.lax.associative_scan|.
Modern, sub-quadratic transformer alternatives
based on state-space models, benefit tremendously
from parallel scan, see e.g.  Mamba \cite{gu2023mamba} and S5
\cite{smithsimplified}.

We sketch the algorithm in the case where $p = n/2$
\footnote{
   For the general case,
    see \cite[p.43]{blelloch1990prefix}
    The argument there is given for semigroups, but it also extrapolates to general categories.
    }
and where $n = 2^\ell$ for some integer $\ell$.

In the first part, the ``up-sweep'' %
all aggregations over the dyadic intervals
$[k 2^i, (k+1) 2^{i}), i = 0, \dots, \ell-1$,
$k=0, \dots, 2^{\ell-i}-1$, are calculated.
Each level of the up-sweep
combines neighboring intervals
of the previous level.
The aggregations on a fixed level
are performed in parallel.
See \Cref{fig:upsweep} for an example,
where for illustrative purposes
the morphisms are taking values
in the (delooping) of the monoid $(\N_{\ge0},+)$.
For orientation, the nodes habe been labeled.
But remember that in the delooping of the monoid
there is only one object, the ``unit'' object $*$.

\tikzset{
    element/.style={circle, draw, minimum size=0.6cm, inner sep=1pt},
    operation/.style={draw=none, font=\small},
    downarrow/.style={->, >=Stealth, thick, black},
    downarrowpink/.style={->, >=Stealth, thick, magenta},
    uparrow/.style={->, >=Stealth, thick, black},
    stepnum/.style={draw=none, text=green!60!black, font=\small}
}

\begin{figure}
    \centering
    \begin{subfigure}[b]{0.49\textwidth}
        \centering
        \begin{tikzpicture}[
            scale=0.6, 
            transform shape,
            node distance=1.5cm]
            \draw[thick] (-0.5,0) -- (-0.5,9);
            \node[rotate=90] at (-0.8,4.5) {UP};
            \draw[->, thick] (-0.5,9) -- (-0.5,9.5);
            
            \node[element] (a1) at (0,0) {a};
            \node[element] (b1) at (1,0) {b};
            \node[element] (c1) at (2,0) {c};
            \node[element] (d1) at (3,0) {d};
            \node[element] (e1) at (4,0) {e};
            \node[element] (f1) at (5,0) {f};
            \node[element] (g1) at (6,0) {g};
            \node[element] (h1) at (7,0) {h};
            \node[element] (i1) at (8,0) {i};
            
            \draw[uparrow] (a1) to[bend left=30] node[black, midway, above] {3} (b1);
            \draw[uparrow] (b1) to[bend left=30] node[black, midway, above] {1} (c1);
            \draw[uparrow] (c1) to[bend left=30] node[black, midway, above] {7} (d1);
            \draw[uparrow] (d1) to[bend left=30] node[black, midway, above] {0} (e1);
            \draw[uparrow] (e1) to[bend left=30] node[black, midway, above] {4} (f1);
            \draw[uparrow] (f1) to[bend left=30] node[black, midway, above] {1} (g1);
            \draw[uparrow] (g1) to[bend left=30] node[black, midway, above] {6} (h1);
            \draw[uparrow] (h1) to[bend left=30] node[black, midway, above] {3} (i1);
            
            \node[element] (a2) at (0,2.5) {a};
            \node[element] (c2) at (2,2.5) {c};
            \node[element] (e2) at (4,2.5) {e};
            \node[element] (g2) at (6,2.5) {g};
            \node[element] (i2) at (8,2.5) {i};
            
            \draw[decorate, decoration={brace, amplitude=5pt}] (0.1,1.0) -- (1.9,1.0);
            \draw[uparrow, thick] (1,1.15) -- (1,2.35);
            
            \draw[decorate, decoration={brace, amplitude=5pt}] (2.1,1.0) -- (3.9,1.0);
            \draw[uparrow, thick] (3,1.15) -- (3,2.35);
            
            \draw[decorate, decoration={brace, amplitude=5pt}] (4.1,1.0) -- (5.9,1.0);
            \draw[uparrow, thick] (5,1.15) -- (5,2.35);
            
            \draw[decorate, decoration={brace, amplitude=5pt}] (6.1,1.0) -- (7.9,1.0);
            \draw[uparrow, thick] (7,1.15) -- (7,2.35);
            
            \draw[uparrow] (a2) to[bend left=30] node[stepnum, midway, above] {4} (c2);
            \draw[uparrow] (c2) to[bend left=30] node[stepnum, midway, above] {7} (e2);
            \draw[uparrow] (e2) to[bend left=30] node[stepnum, midway, above] {5} (g2);
            \draw[uparrow] (g2) to[bend left=30] node[stepnum, midway, above] {9} (i2);
            
            \node[element] (a3) at (0,5) {a};
            \node[element] (e3) at (4,5) {e};
            \node[element] (i3) at (8,5) {i};
            
            \draw[decorate, decoration={brace, amplitude=5pt}] (0.1,3.7) -- (3.9,3.7);
            \draw[uparrow, thick] (2,3.85) -- (2,4.85);
            
            \draw[decorate, decoration={brace, amplitude=5pt}] (4.1,3.7) -- (7.9,3.7);
            \draw[uparrow, thick] (6,3.85) -- (6,4.85);
            
            \draw[uparrow] (a3) to[bend left=30] node[stepnum, midway, above] {11} (e3);
            \draw[uparrow] (e3) to[bend left=30] node[stepnum, midway, above] {14} (i3);
            
            \node[element] (a4) at (0,7.5) {a};
            \node[element] (i4) at (8,7.5) {i};
            
            \draw[decorate, decoration={brace, amplitude=5pt}] (0.1,6.3) -- (7.9,6.3);
            \draw[uparrow, thick] (4,6.45) -- (4,7.35);
            
            \draw[uparrow] (a4) to[bend left=40] node[stepnum, midway, above] {25} (i4);
        \end{tikzpicture}
        \caption{Up-sweep phase of parallel scan; green labels and braces indicate aggregation}
        \label{fig:upsweep}
    \end{subfigure}
    \hfill
    \begin{subfigure}[b]{0.49\textwidth}
        \centering
        \begin{tikzpicture}[
            scale=0.6, 
            transform shape,
            node distance=1.5cm]
            \draw[thick] (-0.5,0) -- (-0.5,9);
            \node[rotate=90] at (-0.8,4.5) {DOWN};
            \draw[->, thick] (-0.5,0) -- (-0.5,-0.5);
            
            \node[element] (a4) at (0,7.5) {a};
            \node[element] (i4) at (8,7.5) {i};
            
            \draw[->, thick] (a4) to[out=45, in=135, looseness=5] node[stepnum, above] {0} (a4);
            
            \draw[->, thick] (a4) to[bend left=20] node[stepnum, midway, above] {25} (i4);
            
            \node[element] (a3) at (0,5) {a};
            \node[element] (e3) at (4,5) {e};
            \node[element] (i3) at (8,5) {i};
            
            \draw[->, thick] (a3) to[out=45, in=135, looseness=5] node[left] {0} (a3);
            
            \draw[->, thick] (a3) to[bend left=20] node[stepnum, midway, above] {11} (e3);
            
            \draw[->, thick] (a3) to[bend left=30] node[black, midway, above] {25} (i3);
            
            \node[element] (a2) at (0,2.5) {a};
            \node[element] (c2) at (2,2.5) {c};
            \node[element] (e2) at (4,2.5) {e};
            \node[element] (g2) at (6,2.5) {g};
            \node[element] (i2) at (8,2.5) {i};
            
            \draw[->, thick] (a2) to[out=45, in=135, looseness=5] node[left] {0} (a2);
            
            \draw[->, thick] (a2) to[bend left=20] node[stepnum, midway, above] {4} (c2);
            
            \draw[->, thick] (a2) to[bend left=25] node[black, midway, above] {11} (e2);
            
            \draw[->, thick] (a2) to[bend left=30] node[stepnum, midway, above] {16} (g2);
            
            \draw[->, thick] (a2) to[bend left=35] node[black, midway, above] {25} (i2);
            
            \node[element] (a1) at (0,0) {a};
            \node[element] (b1) at (1,0) {b};
            \node[element] (c1) at (2,0) {c};
            \node[element] (d1) at (3,0) {d};
            \node[element] (e1) at (4,0) {e};
            \node[element] (f1) at (5,0) {f};
            \node[element] (g1) at (6,0) {g};
            \node[element] (h1) at (7,0) {h};
            \node[element] (i1) at (8,0) {i};
            
            \draw[->, thick] (a1) to[out=45, in=135, looseness=5] node[left] {0} (a1);
            
            \draw[->, thick] (a1) to[bend left=25] node[black, pos=0.8, above] {4} (c1);
            \draw[->, thick] (a1) to[bend left=30] node[stepnum, pos=0.8, above] {11} (d1);
            \draw[->, thick] (a1) to[bend left=35] node[black, pos=0.8, above] {11} (e1);
            \draw[->, thick] (a1) to[bend left=40] node[stepnum, pos=0.8, above] {15} (f1);
            \draw[->, thick] (a1) to[bend left=45] node[black, pos=0.8, above] {16} (g1);
            \draw[->, thick] (a1) to[bend left=50] node[stepnum, pos=0.8, above] {22} (h1);
            \draw[->, thick] (a1) to[bend left=55] node[black, pos=0.8, above] {25} (i1);
            \draw[->, thick] (a1) to[bend left=20] node[stepnum, pos=0.8, above, fill=white, fill opacity=0.7, ] {3} (b1);
        \end{tikzpicture}
        \caption{Down-sweep phase of parallel scan; green labels indicate computation}
        \label{fig:downsweep}
    \end{subfigure}
    \caption{Parallel scan algorithm phases}
    \label{fig:parallel-scan}
\end{figure}
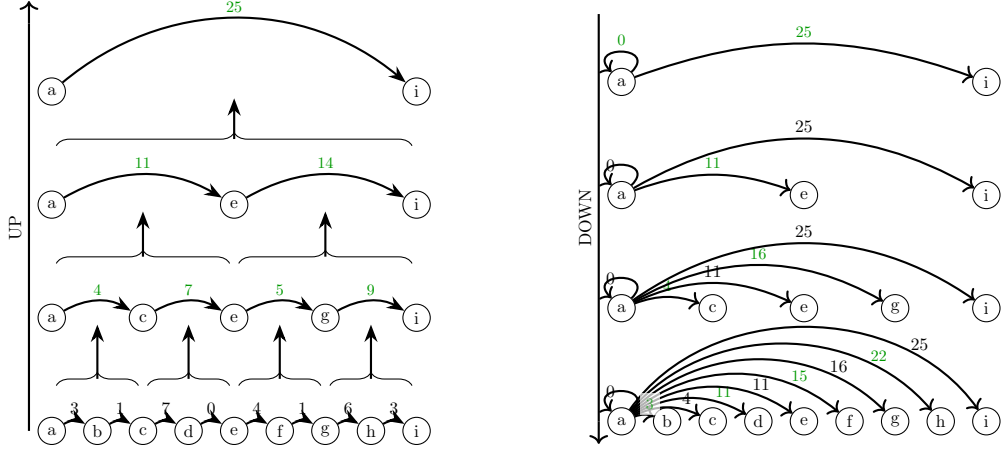

In the second step, the ``down-sweep'',
the results of the ``up-sweep'' are used
to calculate the values
on all intervals $[0,k]$, $k=0, \dots, n$.
Again, on a fixed level, the calculations
are performed in parallel.
See \Cref{fig:downsweep} for an example,
continuing from \Cref{fig:upsweep}.

%
%
%


\section{Two parameters: spatial data}
\label{sec:two-param}

Let spatial data $x_\k$, $\k \in \Z^2$, be given.
For example $x_\k \in \R$.

We want to aggregate this data over rectangles
$[0,m] \times [0,n]$.
In the example of real-valued data, simple examples are
the sum and the max over the rectangle,
\begin{align*}
    \sum_{k=0}^m \sum_{\ell=0}^n x_{k,\ell}, \qquad
    \max_{k=0,\dots,m} \max_{\ell=0,\dots,n} x_{k,\ell}.
\end{align*}

As in the one-parameter case,
these are examples of a general procedure
of categorial aggregation, now using double functors.


\subsection{Double categories, double functors and free double categories}
\label{ss:double_categories_double_functors}

\begin{quote}
    \itshape
    {
    A \emph{double category} is set of 0-cells (objects), 1-cells (vertical and horizontal morphisms), and 2-cells (faces) together with composition maps that satisfy the axioms of a category in both the vertical and horizontal direction,
    and are compatible with each other in a certain way.
    A \emph{double functor} is a structure preserving map between double categories.
    A certain \emph{double category of rectangles} is the free double category over elementary rectangles.
    }
\end{quote}

We will use \emph{heuristic working definitions} of several higher categorical structures.
Their precise definition can be found in \Cref{sec:appendix}.

A \DEF{double category} $\DCC$ consists of
\begin{itemize}
    \item a set of \DEF{$0$-cells} $\DCC_0$,
    \item a set of \DEF{horizontal $1$-cells} $\DCC^h_1$,
    \item a set of \DEF{vertical $1$-cells} $\DCC^v_1$,
    \item a set of \DEF{$2$-cells} $\DCC_2$,
\end{itemize}
with various source and target maps
($\partial^\pm_v, \dots$)
as well as identity maps ($\i_v, \dots$)
and composition maps ($\COMP_v, \dots$).
A $1$-cell $f$ can be thought of as a vertical or horizontal line segment with corners in $\DCC_0$.

\begin{figure}[H]
    \centering
\begin{tikzpicture}[
    point/.style={circle, fill=black, inner sep=2pt},
    >=Stealth
]
    \node[point] (A) at (0,0) {};
    \node[point] (B) at (4,0) {};
    
    \draw[->, very thick, blue] (A) to node[midway, above] {$f$} (B);
    
    \node[gray] at (0,-0.5) {$\partial^-_h(f)$};
    \node[gray] at (4,-0.5) {$\partial^+_h(f)$};
    
\end{tikzpicture}
\end{figure}

A $2$-cell $\alpha$ can be thought of as a rectangle with corners in $\DCC_0$,
and two vertical $1$-cells
($\partial^-_H(\alpha), \partial^+_H(\alpha)$) 
and two horizontal $1$-cells as edges
($\partial^-_V(\alpha), \partial^+_V(\alpha)$).

\begin{figure}[H]
    \centering
    \begin{tikzpicture}[
        point/.style={circle, fill=black, inner sep=2pt},
        hcell/.style={blue, very thick},
        vcell/.style={red, very thick},
        >=Stealth
    ]
        \fill[purple!15] (0,0) rectangle (4,3);
        
        \node[point] (A) at (0,0) {};
        \node[point] (B) at (4,0) {};
        \node[point] (C) at (0,3) {};
        \node[point] (D) at (4,3) {};
        
        \draw[->,hcell] (A) to node[midway, below] {$\partial^-_V(\alpha)$} (B);
        \draw[->,hcell] (C) to node[midway, above] {$\partial^+_V(\alpha)$} (D);
        
        \draw[->,vcell] (A) to node[midway, left] {$\partial^-_H(\alpha)$} (C);
        \draw[->,vcell] (B) to node[midway, right] {$\partial^+_H(\alpha)$} (D);
        
        \node[purple] at (2,1.5) {$\alpha$};
    \end{tikzpicture}
    \caption{A $2$-cell $\alpha$ with its boundary}
\end{figure}
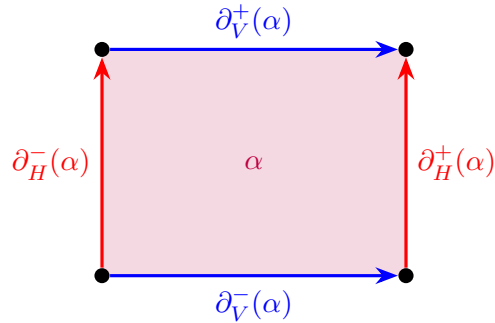

Two $2$-cells $\alpha,\beta$ can be composed horizontally if the 
vertical edge of $\alpha$ ``to the right'' (the horizontal target) is the same as the vertical edge of $\beta$ ``to the left''
(the horizontal source).
We denote this composition by $\alpha \COMP_H \beta$.
Analogously, the vertical composition of $2$-cells is denoted $\alpha \COMP_V \beta$.

Two vertical $1$-cells can be composed if the ``top corner'' (the target) of the first is the same as the ``bottom corner'' (the source) of the second.
We denote this composition by $\alpha \COMP_v \beta$.
Analogously, the horizontal composition of $1$-cells is denoted $\alpha \COMP_h \beta$.

\begin{figure}[H]
    \centering
    \begin{tikzpicture}[
        scale=0.9, 
        transform shape,
        point/.style={circle, fill=black, inner sep=2pt},
        hcell/.style={blue, very thick},
        vcell/.style={red, very thick},
        >=Stealth
    ]
        
        \fill[purple!15] (0,3) rectangle (6,6);
        \fill[cyan!15] (0,0) rectangle (6,3);
        \node[point] (A) at (0,6) {};
        \node[point] (B) at (6,6) {};
        \node[point] (C) at (0,3) {};
        \node[point] (D) at (6,3) {};
        
        \draw[->,hcell] (C) to node[midway, below] {$g$} (D);
        \draw[->,hcell] (A) to node[midway, above] {$h$} (B);
        
        \draw[->,vcell] (A) to node[midway, left] {$w$} (C);
        \draw[->,vcell] (B) to node[midway, right] {$x$} (D);
        
        \node[purple] at (3,4.5) {$\beta$};
        
        \node[point] (E) at (0,0) {};
        \node[point] (F) at (6,0) {};
        
        \draw[->,hcell] (E) to node[midway, below] {$f$} (F);
        \draw[->,hcell] (C) to node[midway, above] {$g$} (D);
        
        \draw[->,vcell] (E) to node[midway, left] {$u$} (C);
        \draw[->,vcell] (F) to node[midway, right] {$v$} (D);
        
        \node[cyan] at (3,1.5) {$\alpha$};
        
        \draw[|->, thick] (7,3) -- (7.8,3);
        
        \fill[orange!15] (9,0) rectangle (15,6);
        \node[point] (G) at (9,6) {};
        \node[point] (H) at (15,6) {};
        \node[point] (I) at (9,0) {};
        \node[point] (J) at (15,0) {};
        
        \draw[->,hcell] (I) to node[midway, below] {$f$} (J);
        \draw[->,hcell] (G) to node[midway, above] {$h$} (H);
        
        \draw[<-,vcell] (G) to node[midway, left] {$u \COMP_v w$} (I);
        \draw[<-,vcell] (H) to node[midway, right] {$v \COMP_v x$} (J);
        
        \node[orange] at (12,3) {$\alpha \COMP_V \beta$};
        
    \end{tikzpicture}
    \caption{Two 2-cells $\alpha$ and $\beta$ can be composed vertically when the bottom horizontal 1-cell of $\alpha$ 
           is the same as the top horizontal 1-cell of $\beta$ (here, $g$).}
    \label{fig:vertical_composition}
\end{figure}
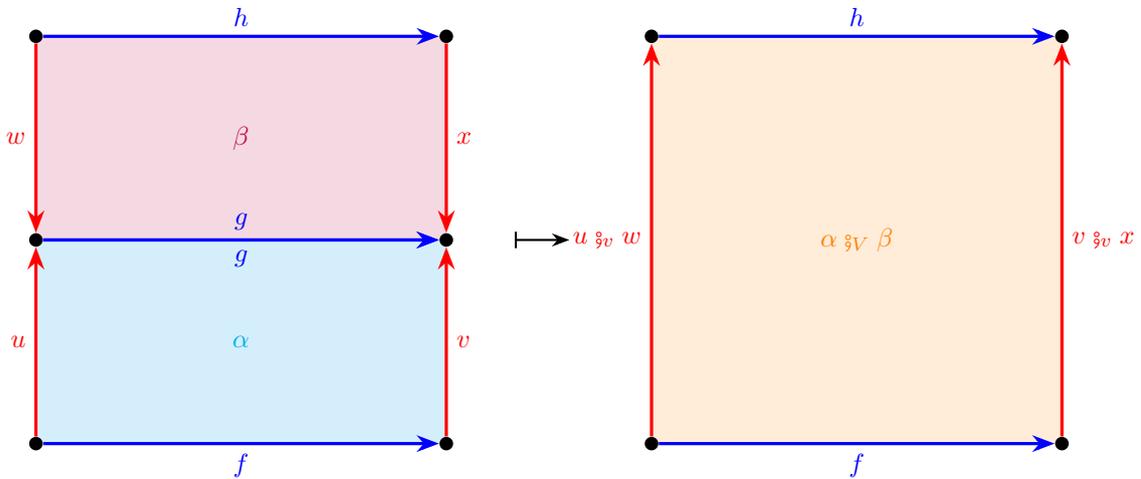

\begin{figure}
    \centering
    \begin{tikzpicture}[
        scale=0.9, 
        transform shape,
        point/.style={circle, fill=black, inner sep=2pt},
        hcell/.style={blue, very thick},
        vcell/.style={red, very thick},
        >=Stealth
    ]
        
        \fill[purple!15] (0,0) rectangle (3,3);
        \fill[cyan!15] (3,0) rectangle (6,3);
        \node[point] (A) at (0,0) {};
        \node[point] (B) at (3,0) {};
        \node[point] (C) at (0,3) {};
        \node[point] (D) at (3,3) {};
        
        \draw[->,hcell] (A) to node[midway, below] {$f$} (B);
        \draw[->,hcell] (C) to node[midway, above] {$g$} (D);
        
        \draw[->,vcell] (A) to node[midway, left] {$u$} (C);
        \draw[->,vcell] (B) to node[midway, right] {$v$} (D);
        
        \node[purple] at (1.5,1.5) {$\alpha$};
        
        \node[point] (E) at (6,0) {};
        \node[point] (F) at (6,3) {};
        
        \draw[->,hcell] (B) to node[midway, below] {$h$} (E);
        \draw[->,hcell] (D) to node[midway, above] {$i$} (F);
        
        \draw[->,vcell] (E) to node[midway, right] {$w$} (F);
        
        \node[cyan] at (4.5,1.5) {$\beta$};
        
        \node[above] at (3,1.5) {$v$};
        
        \draw[|->, thick] (7,1.5) -- (8,1.5);
        
        \fill[orange!15] (9,0) rectangle (15,3);
        \node[point] (G) at (9,0) {};
        \node[point] (H) at (15,0) {};
        \node[point] (I) at (9,3) {};
        \node[point] (J) at (15,3) {};
        
        \draw[->,hcell] (G) to node[midway, below] {$f \COMP_h h$} (H);
        \draw[->,hcell] (I) to node[midway, above] {$g \COMP_h i$} (J);
        
        \draw[->,vcell] (G) to node[midway, left] {$u$} (I);
        \draw[->,vcell] (H) to node[midway, right] {$w$} (J);
        
        \node[orange] at (12,1.5) {$\alpha \COMP_H \beta$};
        
    \end{tikzpicture}
    \caption{Two 2-cells $\alpha$ and $\beta$ can be composed horizontally when the right vertical 1-cell of $\alpha$ 
            is the same as the left vertical 1-cell of $\beta$ (here, $v$).};
    \label{fig:horizontal_composition}
\end{figure}
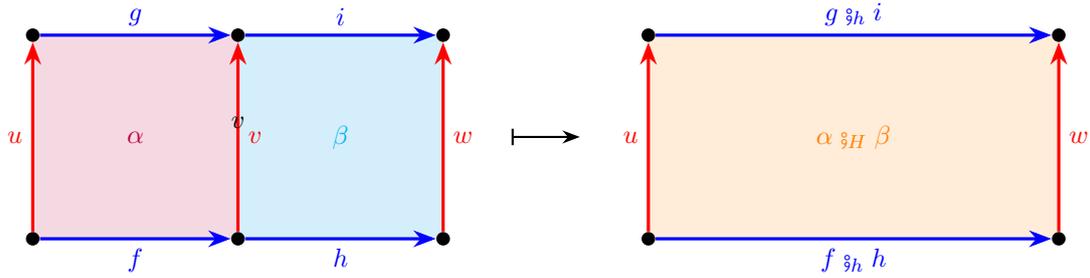

$0$-cells have horizontal and vertical identity $1$-cells attached to them,
vertical $1$-cells have horizontal $2$-cells attached to them,
and horizontal $1$-cells have vertical $2$-cells attached to them.
They all act as identities under the respective compositions.

All compositions are associative. This turns the horizontal $1$-cells (together with the $0$-cells)
into a (classical) category (analogously for the vertical $1$-cells).
Likewise, the $2$-cells together with vertical $1$-cells form a category (analogously for the horizontal $1$-cells
and vertical $1$-cells).

A truely ``higher categorial'' condition is given by the interchange law:

\definecolor{lightgreen}{RGB}{210,255,170}
\definecolor{lightpink}{RGB}{255,180,255}

\begin{center}
\begin{tikzpicture}[scale=1.2]
    \begin{scope}[shift={(-2.5,0)}]
        \fill[lightgreen] (0,0) rectangle (1,1);
        \fill[lightgreen] (0,1) rectangle (1,2);
        \fill[lightpink] (1,0) rectangle (2,1);
        \fill[lightpink] (1,1) rectangle (2,2);
        
        \draw[thick] (0,0) rectangle (2,2);
        \draw[thick] (1,0) -- (1,2);
        \draw[dotted, thick] (0,1) -- (2,1);
        
        \draw[dotted, thick] (0,1) -- (2,1);
        \draw[dotted, thick] (1,0) -- (1,2);
        
        \node at (0.5,0.5) {$\alpha$};
        \node at (0.5,1.5) {$\gamma$};
        \node at (1.5,0.5) {$\beta$};
        \node at (1.5,1.5) {$\delta$};
        \node at (1,-1) {$\greenbox{$(\alpha \COMP_V \gamma)$} \COMP_H \pinkbox{$(\beta \COMP_V \delta)$}$};
    \end{scope}
    
    \begin{scope}[shift={(2.5,0)}]
        \fill[lightgreen] (0,0) rectangle (1,1);
        \fill[lightpink] (0,1) rectangle (1,2);
        \fill[lightgreen] (1,0) rectangle (2,1);
        \fill[lightpink] (1,1) rectangle (2,2);
        
        \draw[thick] (0,0) rectangle (2,2);
        \draw[dotted, thick] (1,0) -- (1,2);
        \draw[thick] (0,1) -- (2,1);
        
        \draw[dotted, thick] (0,1) -- (2,1);
        \draw[dotted, thick] (1,0) -- (1,2);
        
        \node at (0.5,0.5) {$\alpha$};
        \node at (0.5,1.5) {$\gamma$};
        \node at (1.5,0.5) {$\beta$};
        \node at (1.5,1.5) {$\delta$};
        
        \node at (1,-1) {$\greenbox{$(\alpha \COMP_H \beta)$} \COMP_V \pinkbox{$(\gamma \COMP_H \delta)$}$};
    \end{scope}
    
    \node at (1,-1) {$=$};
    \end{tikzpicture}
\end{center}

It states that the composition of four composable $2$-cells in the shown constellation
is independent of the order of composition.

\renewcommand\P{{\mathbb{P}}}
\begin{example}[{Rectangles; \cite[Section A.7]{SoncZucc15}, \cite[Example 2.10]{fiore2008model}.}]
    Define the double category $\RECT$ of rectangles with corners in $\Z^2$ as follows.
    \begin{align*}
        \RECT_0    &:= \Z^2 \\
        \RECT^h_1  &:= \{ [s_1,t_1] \times \{x_2\} \mid s_1,t_1,x_2 \in \Z, s_1 \le t_1 \} \\
        \RECT^v_1  &:= \{ \{x_1\} \times [s_1,t_1] \mid x_1,s_2,t_2 \in \Z, s_2 \le t_2 \} \\
        \RECT_2    &:= \{ [s_1,t_1] \times [s_2,t_2] \mid s_1,t_1,s_2,t_2 \in \Z, s_1 \le t_1, s_2 \le t_2 \},
    \end{align*}
    with
    \begin{align*}
        \partial^-_h( [s_1,t_1] \times \{x_2\} ) &:= (s_1,x_2) \\
        \partial^+_h( [s_1,t_1] \times \{x_2\} ) &:= (t_1,x_2) \\
        \i_{h}(x_1,x_2) &:= [x_1,x_1] \times \{x_2\},
    \end{align*}
    and analogously for the vertical morphisms.

    Further
    \begin{align*}
        \partial^-_V( [s_1,t_1] \times [s_2,t_2]  ) &:= [s_1,t_1] \times \{s_2\}, \\
        \partial^+_V( [s_1,t_1] \times [s_2,t_2]  ) &:= [s_1,t_1] \times \{t_2\}, \\
        \partial^-_H( [s_1,t_1] \times [s_2,t_2]  ) &:= \{s_1\} \times [s_2,t_2], \\
        \partial^+_H( [s_1,t_1] \times [s_2,t_2]  ) &:= \{t_1\} \times [s_2,t_2], \\
        \i_{V}( [s_1,t_1] \times \{x_2\} ) &:= [s_1,t_1] \times [x_2,x_2], \\
        \i_{H}( [s_1,t_1] \times \{x_2\} ) &:= [s_1,t_1] \times [x_2,x_2].
    \end{align*}

    The compositions are all given by set union,
    so that \uline{in this double category the figures above can be taken literally}.

\end{example}

This double category of rectangles will be the \emph{domain} of our double functors built from spatial data.

Regarding the \emph{codomain} of our double functors,
if we try to find an analog of the delooping of a monoid
\Cref{ex:categories}, \ref{ex:categories:delooping},
we find that, at first sight, only commutative monoids
can be used.
\begin{example}
    \label{ex:abelian_group}

    Let $A$ be an abelian group (or abelian monoid).
    Define the double category $\DCC$ as follows: 
    \begin{itemize}
        \item $\DCC_0 = \{*\}$,
        \item $\DCC^h_1 = \DCC^v_1 = \{*\}$,
        \item $\DCC_2 = A$,
    \end{itemize}
    with obvious structure maps.
    Only the interchange law needs to be checked,
    which follows from (and necessitates) the commutativity of $A$.

    Note that the 
    two categories of $1$-cells are the same. 
    Such a double category is called \DEF{edge-symmetric} (\cite{brown1999double}).

\end{example}
If the group in \Cref{ex:abelian_group} is not abelian,
the interchange law will not hold
(this can be seen by the \emph{Eckman-Hilton argument}, see \cite{eckmann1962group}).
In order to get interesting (non-abelian)
codomains for our double functors,
we need the concept of a \emph{crossed module of groups}.

\newcommand\conj{\mathrm{conj}}
\begin{definition}[Whitehead '49 \cite{whitehead1949combinatorial}]
    A \textbf{crossed module of groups} $(\feedbackGG: H \to G, \actionGG)$
    is a diagram
    \begin{align*}
        H \stackrel{\feedbackGG}\to G \stackrel{\actionGG}\to \Aut(H),
    \end{align*}
    where $H,G$ are groups and $\feedbackGG$ and $\actionGG$ are group morphisms, satisfying the identities
    \begin{align}
        \feedbackGG \circ \actionGG_g    &= \conj_g \circ \feedbackGG && g \in G, \label{eq:equivariant} \tag{EQUI}\\
        \actionGG_{\feedbackGG(h)}       &= \conj_h     && h \in H. \label{eq:peiffer} \tag{PEIF}
    \end{align}
    Here, $\conj$ denotes the inner action of a group on itself,
    i.e. $\conj_g(g') = g g' g^{-1}$.
  
    $\feedbackGG$ is called the \textbf{feedback} and $\actionGG$ the \textbf{action}.
\end{definition}

\begin{example}~
    \label{ex:crossed_modules}
    \begin{enumerate}

        \item
        Any normal subgroup $H \trianglelefteq G$ of a group $G$ gives rise to a crossed module
        $(\feedbackGG: H \to G, \actionGG)$,
        where $\feedbackGG$ is the inclusion and $\actionGG$ is the conjugation action of $G$ on $H$.
        This example is ``trivial'' in the sense that $\feedbackGG$ is injective.

        \item
        Any group $G$ gives a crossed module
        $(\feedbackGG: G \to \Aut(G), \actionGG)$,
        where $\feedbackGG$ is taking a group element to the corresponding inner automorphism,
        i.e. $\feedbackGG(g) = \conj_g$,
        and $\actionGG$ is given by evaluation of an automorphism on the group elements.

        \item
        \label{ex:crossed_modules:abelian}
        Let $A$ be an abelian group;
        then $\feedbackGG: A \to 1$ is a crossed module,
        where $\feedbackGG$ is the trivial map
        and $\actionGG$ is given by the trivial action.
        (This is a sub-crossed module of the previous one.)
        
        \item
        \label{ex:crossed_modules:general_linear}
        The general linear crossed module.%
        \footnote{
                \cite[Section 2.2]{martins2011lie},
                \cite{forrester2003representations},
                \cite[Section 4.2]{lee2023random}
            }
        Fix a field $\K$, integers $n,p,q \ge 0$ and consider 
        \begin{align*}
            G
            &:=
            \{ 
                (
            \begin{bmatrix}
                P & 0_{n\times p} \\
                R & S
            \end{bmatrix},
            \begin{bmatrix}
                P & B \\
                0_{q\times n} & D
            \end{bmatrix} )
            \mid
            P \in \GL_n(\K), R \in \K^{p \times n}, S \in \GL_p(\K), \\
            &\qquad\qquad
            \qquad\qquad
            \qquad\qquad
            \qquad
            \quad
            B \in \K^{n \times q}, D \in \GL_q(\K) \}.
        \end{align*}
        This is a group via entrywise matrix multiplication.
        Let
        \begin{align*}
            H
            :=
            \GL^{n,p,q}_{-1}
            := 
            \{ 
                \begin{bmatrix}
                    P - \id_n & B \\
                    R & N
                \end{bmatrix} \mid
                P \in \GL_n(\K), R \in \K^{p \times n}, N \in \K^{p \times q}, B \in \K^{n \times q} \}.
        \end{align*}
        It becomes a group under the operation
        \begin{align*}
            \begin{bmatrix}
                P-\id_n & B \\
                R & N
            \end{bmatrix}
            \bullet_h
            \begin{bmatrix}
                P'-\id_n & B' \\
                R' & N'
            \end{bmatrix}
            =
            \begin{bmatrix}
                P'P - \id_n & P' B + B' \\
                R' P + R & R' B + N + N'
            \end{bmatrix}.
        \end{align*}
        The unit is given by the zero matrix, and 
        the inverse is given by
        \begin{align*}
            \begin{bmatrix}
                P - \id_n & B \\
                R & N
            \end{bmatrix}^{\bullet_h -1}
            =
            \begin{bmatrix}
                P^{-1} - \id_n & -P^{-1} B \\
                -R P^{-1} & -N + R P^{-1} B
            \end{bmatrix}.
        \end{align*}

        The feedback is defined as follows
        \begin{align*}
            \feedbackGG( 
            \begin{bmatrix}
                P - \id_n & B \\
                R & N
            \end{bmatrix} )
            :=
            (
                \begin{bmatrix}
                P & 0_{n\times p} \\
                R & \id_p
                \end{bmatrix},
                \begin{bmatrix}
                    P & B \\
                    0_{q\times n} & \id_q
                \end{bmatrix}
            )
        \end{align*}
        The action of $(f_U, f_V) \in G$ on $h \in H$ is given by
        \begin{align*}
            \triangleright_{ (f_U, f_V) }( h )
            =
            f_V^{-1} \cdot h \cdot f_U.
        \end{align*}

        \item
        A certain free crossed module of Lie algebras
        is constructed in \cite{kapranov2015membranes},
        which formally corresponds to a (Lie) group.
        See also \cite{chevyrev2024multiplicative,lee2024_2Dsigs}.

    \end{enumerate}
    
\end{example}

The following double category will usually form the codomain
of our functors built from spatial data.
It can be considered the delooping of a crossed module
to a double category%
\footnote{ In fact, a double \emph{groupoid}, but this is not important for us.  }.
\begin{example}[{Crossed module to double category; \cite[Section A.8]{SoncZucc15}}]
	\label{ex:cm_to_double}

	Let $\mathfrak C = (\feedbackGG: H \to G, \actionGG)$ be a crossed module of groups.
	Define 
    the following edge-symmetric double category $\BB \mathfrak C$.
    The category of $1$-cells is the delooping of $G$,
    in particular $D_0 := \{*\}$, $D_1 := G$.
    Further,
	\begin{align*}
		D_2 := \{
            \SQUAREPATTERN{x_s}{x_e}{x_n}{x_w}{X} \in G^4 \times H \mid \feedbackGG(X) x_w x_n = x_s x_e \},
	\end{align*}
	with boundaries
	\begin{align*}
		\partial_V^-(\SQUAREPATTERN{x_s}{x_e}{x_n}{x_w}{X}) &:= x_s, & \quad
        \partial_V^+(\SQUAREPATTERN{x_s}{x_e}{x_n}{x_w}{X}) := x_n, \\
        \partial_H^-(\SQUAREPATTERN{x_s}{x_e}{x_n}{x_w}{X}) &:= x_w, & \quad
        \partial_H^+(\SQUAREPATTERN{x_s}{x_e}{x_n}{x_w}{X}) := x_e.
	\end{align*}

	Horizontal composition of $2$-cells is given by (if $x_e = y_w$)
	\begin{align*}
        \SQUAREPATTERN{x_s}{x_e}{x_n}{x_w}{X} \COMP_H \SQUAREPATTERN{y_s}{y_e}{y_n}{y_w}{Y}
        :=
        \SQUAREPATTERN{x_s y_s}{y_e}{x_n y_n}{x_w}{ \actionGG_{x_s}(Y) X }.
	\end{align*}
    Vertical composition is given by (if $x' = b$)
    \begin{align*}
        \SQUAREPATTERN{x_s}{x_e}{x_n}{x_w}{X} \COMP_V \SQUAREPATTERN{x'_s}{x'_e}{x'_n}{x'_w}{X}
        :=
        \SQUAREPATTERN{x_s}{x_e x'_e}{x'_n}{x_w x'_w}{ X \actionGG_{x_w}(X') }.
    \end{align*}
    We verify the double category axioms for this construction in \Cref{ex:full_cm_to_double}.
\end{example}

A \DEF{double functor} $F: \DCC \to \DCD$ between double categories
is given by four maps $F_i: C_i \to D_i$, $i=0,2$, $F_1^h: C_1^h \to D_1^h$, $F_1^v: C_1^v \to D_1^v$,
which commute with all the structure maps of the double category.

\begin{example}
    \label{ex:double_sum}

        Let $x_\k \in \R$, $\k \in \Z^2$, be spatial data.
        Let $\DCC$ 
        be the double category from 
        \Cref{ex:abelian_group} for the abelian group $(\R,+)$.
        Define
        \begin{align*}
            F_0(\k)                        &:= *, \\
            F^h_1([s_1,t_1]\times \{x_2\}) &:= *, \\
            F^v_1(\{x_1\}\times [s_2,t_2]) &:= *, \\
            F_2([s_1,t_1]\times[s_2,t_2])  &:= \sum_{\k \in [s_1,t_1]\times[s_2,t_2]} x_\k.
        \end{align*}

        Then $F: \RECT \to \DCC$ is a double functor.

    
\end{example}

As in the case of functors on $\INT$,
we prove the functoriality in the example
by establishing freeness of the double category $\RECT$.
This then enables us to aggregate in general double categories,
for example in the double category of crossed modules \Cref{ex:cm_to_double}.

A \DEF{double graph} $\DGG$ consists of
\begin{itemize}
    \item a set of \DEF{$0$-cells} $\DGG_0$,
    \item a set of \DEF{horizontal $1$-cells} $\DGG^h_1$,
    \item a set of \DEF{vertical $1$-cells} $\DGG^v_1$,
    \item a set of \DEF{$2$-cells} $\DGG_2$,
\end{itemize}
with various source and target maps:
\begin{align*}
    \partial^\pm_h: \DGG^h_1 \to \DGG_0, & \quad
    \partial^\pm_v: \DGG^v_1 \to \DGG_0, \\
    \partial^\pm_H: \DGG_2 \to \DGG^v_1, & \quad
    \partial^\pm_V: \DGG_2 \to \DGG^h_1.
\end{align*}
The only consistency condition is that ``the four corners of a $2$-cell are well-defined'':
\begin{align*}
    \partial^-_v( \partial^-_H(\alpha) ) &= \partial^-_h( \partial^-_V(\alpha) ), \quad
    \partial^+_v( \partial^-_H(\alpha) )  = \partial^-_h( \partial^+_V(\alpha) ) \\
    \partial^-_v( \partial^+_H(\alpha) ) &= \partial^+_h( \partial^-_V(\alpha) ), \quad
    \partial^+_v( \partial^+_H(\alpha) ) = \partial^+_h( \partial^+_V(\alpha) ).
\end{align*}
A \DEF{morphism of double graphs} $F: \DGG \to \DGH$ consists of maps
\begin{align*}
    F_0: \DGG_0 \to \DGH_0, & \quad
    F^h_1: \DGG^h_1 \to \DGH^h_1, \\
    F^v_1: \DGG^v_1 \to \DGH^v_1, & \quad
    F_2: \DGG_2 \to \DGH_2,
\end{align*}
which respect the boundary maps.
We note that any double category $\DCD$ can be considered as a double graph
by ``forgetting compositions and identities''. A double functor becomes a morphism of 
the underlying double graphs.

\bigskip

The following theorem is essentially contained in \cite{dawson2002free}.
Our proof is inspired by \cite{dawson2002free}.
\begin{theorem}
    \label{thm:free_double_category}
    $\RECT$ is the free double category    
    over the following double
    graph $R$
    \begin{align*}
        R_0   &:= \Z^2 \\
        R^h_1 &:= \{ [i,i+1]\times\{j\} \mid i,j \in \Z \}   \\
        R^v_1 &:= \{ \{i\} \times [j,j+1] \mid i,j \in \Z \} \\
        R_2   &:= \{ [i, i+1] \times [j, j+1] \mid i,j \in \Z \},
    \end{align*}
    with obvious boundaries.
    This means that there is a morphism of double graphs $\iota: R \to \RECT$
    such that for any double category $\mathbb D$ and
    any morphism of double graphs $F: R \to \mathbb D$,
    there exists a unique double functor $\hat F: \RECT \to \mathbb D$
    such that $\iota \COMP \hat F = F$.
\end{theorem}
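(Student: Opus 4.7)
The plan is to mirror the proof of \Cref{thm:free_category_over_quiver}: define $\hat F$ on each kind of cell by writing that cell uniquely as a composite of generators of $R$, then apply $F$ to those generators and assemble the images via the composition in $\mathbb D$. Uniqueness is immediate once one knows that every cell of $\RECT$ is such a composite, since any double functor extending $F$ must commute with all composition and identity maps.

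First, set $\hat F_0 := F_0$. For a horizontal $1$-cell $[s_1,t_1]\times\{x_2\} \in \RECT^h_1$, if $s_1 = t_1$ define its image as $\i_h(F_0(s_1,x_2))$, and otherwise as the horizontal composition in $\mathbb D$ of the images $F_1^h([i,i+1]\times\{x_2\})$ for $s_1 \le i < t_1$, which is well-defined by associativity of $\COMP_h$. Define $\hat F_1^v$ analogously. For a $2$-cell $[s_1,t_1]\times[s_2,t_2]$, the evident tiling by the unit squares $[i,i+1]\times[j,j+1]$ with $s_1 \le i < t_1$ and $s_2 \le j < t_2$ yields an array of elements $F_2([i,i+1]\times[j,j+1]) \in \mathbb D_2$; define $\hat F_2([s_1,t_1]\times[s_2,t_2])$ by first composing each row horizontally using $\COMP_H$ and then composing the resulting rows vertically using $\COMP_V$. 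Degenerate cases (where $s_1 = t_1$ or $s_2 = t_2$) are handled by the double-category identities $\i_H$, $\i_V$.

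The main obstacle is showing that $\hat F_2$ is well-defined in the sense that, for composition-compatibility, the ``row-then-column'' order chosen above agrees with every other legal bracketing of the tile array, in particular with ``column-then-row'' and with all mixed orderings. This is precisely the coherence guaranteed by the interchange law in $\mathbb D$, combined with associativity of $\COMP_H$ and $\COMP_V$ and the unit laws; it is a finite-grid instance of the general theorem of Dawson--Paré, which states that every planar pasting diagram of 2-cells in a double category has a unique value. I would prove the needed special case by induction on $(t_1-s_1)+(t_2-s_2)$: the inductive step uses a single application of the interchange law to swap the order in which two adjacent $2\times 2$ sub-tilings are assembled, and the base case is the trivial $1\times 1$ tile.

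With well-definedness in hand, checking that $\hat F$ is a double functor is routine: compatibility with source and target maps follows because horizontal and vertical composites of generators have the expected boundaries in $\mathbb D$; compatibility with the horizontal and vertical identities follows because a degenerate rectangle tiles uniquely as a composite of $\i_h$/$\i_v$/$\i_H$/$\i_V$ images; and compatibility with $\COMP_H$ and $\COMP_V$ of arbitrary cells in $\RECT$ follows because the tiling of a composite rectangle is the concatenation of the tilings of its factors, so associativity gives $\hat F_2(\alpha \COMP_H \beta) = \hat F_2(\alpha) \COMP_H \hat F_2(\beta)$, and analogously vertically. For uniqueness, any double functor $G: \RECT \to \mathbb D$ with $\iota \COMP G = F$ agrees with $\hat F$ on the generating unit cells, and since every cell of $\RECT$ is a composite of these generators, $G = \hat F$.
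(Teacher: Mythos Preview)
Your proposal is correct and follows essentially the same approach as the paper: extend $\hat F$ on $1$-cells via the freeness of each horizontal/vertical strip as a category, then on $2$-cells by tiling with unit squares and using induction on rectangle size together with the interchange law in $\mathbb D$ to show the composite is independent of the order of assembly. The only cosmetic difference is that you fix a canonical row-then-column order and then argue it agrees with every other bracketing, whereas the paper defines $\hat F_2$ recursively via an \emph{arbitrary} single split $\RR = \RR_A \COMP_H \RR_B$ or $\RR = \RR_A \COMP_V \RR_{A'}$ and proves well-definedness by comparing any horizontal split against any vertical split via their common four-quadrant refinement and one application of interchange---this is a slightly cleaner organization of the same inductive step than your ``swap adjacent $2\times 2$ sub-tilings'' phrasing.
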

\begin{proof}
    \newcommand\RR{{\mathcal{R}}}

    Let $\mathbb D$ be any double category and
    let $F: R \to \mathbb D$ be any morphism of double graphs.
    Let $\iota: R \to \RECT$ be the inclusion, which is clearly
    a morphism of double graphs.

    For horizontal $1$-cells we use the fact that the ``strip''
    \begin{align*}
        \{ [s_1,t_1] \times \{j\} \mid s_1,t_1 \in \Z \} \subset \RECT_1^h,
    \end{align*}
    is the free category on the elementary $1$-cells $[i,i+1] \times \{j\}$.
    Analogously for vertical $1$-cells.
    $F_1^h$ and $F_1^v$ then lift to unique functors 
    $\hat F_1^h$ and $\hat F_1^v$.

    Regarding $2$-cells, 
    on elementary squares we set
    \begin{align*}
        \hat F( [i,i+1] \times [j,j+1] ) := F( [i,i+1] \times [j,j+1] ).
    \end{align*}

    Write $\COMP^\RECT_H, \COMP^\RECT_V,$ for the composition of $2$-cells in $\RECT$,
    and $\COMP^\DCD_H, \COMP^\DCD_V$ for the composition of $2$-cells in $\mathbb D$.
    If $\RR$ is a non-elementary rectangle, it
    can be written as $\RR = \RR_A \COMP^\RECT_H \RR_B$
    (with $\RR_A,\RR_B$ non-degenerate)
    or $\RR = \RR_{A} \COMP^\RECT_V \RR_{A'}$ (with $\RR_A,\RR_{A'}$ non-degenerate).
    Note that both cases can happen and in either case
    the splitting need not be unique, see \Cref{fig:rectangle_decomposition}.

    \begin{figure}[h]
        \centering
        \begin{tikzpicture}[
            scale=0.9, 
            transform shape,
            point/.style={circle, fill=black, inner sep=2pt},
            hcell/.style={blue, very thick},
            vcell/.style={red, very thick},
            >=Stealth
        ]
            
            \begin{scope}[shift={(0,0)}]
                \draw[thick] (0,0) rectangle (6,3);
                
                \draw[hcell, dashed] (3,0) -- (3,3);
                
                \fill[blue!10] (0,0) rectangle (3,3);
                \fill[cyan!10] (3,0) rectangle (6,3);
                
                \draw[thick] (0,0) rectangle (6,3);
                \draw[hcell] (3,0) -- (3,3);
                
                \node[blue] at (1.5,1.5) {$\RR_A$};
                \node[cyan] at (4.5,1.5) {$\RR_B$};
                
                \node[point] at (0,0) {};
                \node[point] at (3,0) {};
                \node[point] at (6,0) {};
                \node[point] at (0,3) {};
                \node[point] at (3,3) {};
                \node[point] at (6,3) {};
                
                \node at (3,-0.5) {$\RR = \RR_A \COMP^\RECT_H \RR_B$};
                \node[align=center, text width=8cm] at (3,-1.5) 
                    {Alternative horizontal compositions into non-degenerate rectangles};
            \end{scope}
            
            \begin{scope}[shift={(7,0)}]
                \draw[thick] (0,0) rectangle (6,3);
                
                \draw[vcell, dashed] (0,1.5) -- (6,1.5);
                
                \fill[red!10] (0,1.5) rectangle (6,3);
                \fill[magenta!10] (0,0) rectangle (6,1.5);
                
                \draw[thick] (0,0) rectangle (6,3);
                \draw[vcell] (0,1.5) -- (6,1.5);
                
                \node[red] at (3,2.25) {$\RR_{A'}$};
                \node[magenta] at (3,0.75) {$\RR_A$};
                
                \node[point] at (0,0) {};
                \node[point] at (6,0) {};
                \node[point] at (0,1.5) {};
                \node[point] at (6,1.5) {};
                \node[point] at (0,3) {};
                \node[point] at (6,3) {};
                
                \node at (3,-0.5) {$\RR = \RR_A \COMP^\RECT_V \RR_{A'}$};
                \node[align=center, text width=8cm] at (3,-1.5) 
                    {Alternative vertical compositions into non-degenerate rectangles};
            \end{scope}
            
            \begin{scope}[shift={(0,4)}]
                \draw[thick] (0,0) rectangle (6,3);
                
                \draw[hcell, dashed] (2,0) -- (2,3);
                
                \fill[blue!10] (0,0) rectangle (2,3);
                \fill[cyan!10] (2,0) rectangle (6,3);
                
                \draw[thick] (0,0) rectangle (6,3);
                \draw[hcell] (2,0) -- (2,3);
                
                \node[blue] at (1,1.5) {$\acute \RR_A$};
                \node[cyan] at (4,1.5) {$\acute \RR_B$};
                
                \node[point] at (0,0) {};
                \node[point] at (2,0) {};
                \node[point] at (6,0) {};
                \node[point] at (0,3) {};
                \node[point] at (2,3) {};
                \node[point] at (6,3) {};
                
                \node at (3,-0.5) {$\RR = \acute \RR_A \COMP^\RECT_H \acute \RR_B$};
            \end{scope}
            
            \begin{scope}[shift={(7,4)}]
                \draw[thick] (0,0) rectangle (6,3);
                
                \draw[vcell, dashed] (0,2) -- (6,2);
                
                \fill[red!10] (0,2) rectangle (6,3);
                \fill[magenta!10] (0,0) rectangle (6,2);
                
                \draw[thick] (0,0) rectangle (6,3);
                \draw[vcell] (0,2) -- (6,2);
                
                \node[red] at (3,2.5) {$\acute \RR_{A'}$};
                \node[magenta] at (3,1) {$\acute \RR_A$};
                
                \node[point] at (0,0) {};
                \node[point] at (6,0) {};
                \node[point] at (0,2) {};
                \node[point] at (6,2) {};
                \node[point] at (0,3) {};
                \node[point] at (6,3) {};
                
                \node at (3,-0.5) {$R = \acute \RR_A \COMP^\RECT_V \acute \RR_{A'}$};
            \end{scope}
            
        \end{tikzpicture}
        \caption{Decomposition of a non-elementary rectangle $\RR$ into two non-degenerate rectangles.}
        \label{fig:rectangle_decomposition}
    \end{figure}
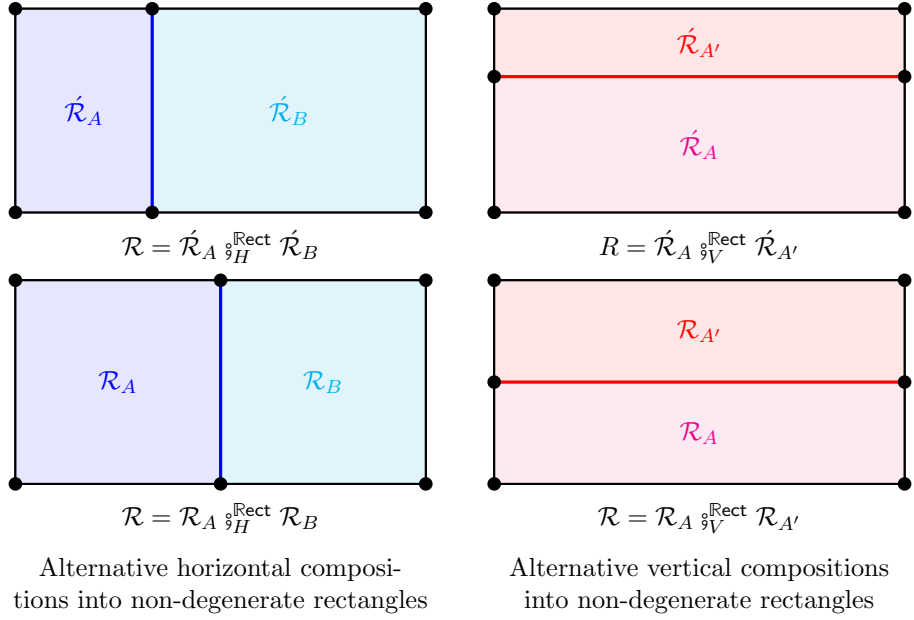

    We then define
    \begin{align*}
        \hat F( \RR ) := \hat F( \RR_A ) \COMP^{\mathbb D}_H \hat F(\RR_B)
    \end{align*}
    or
    \begin{align*}
        \hat F( \RR ) := \hat F( \RR_A ) \COMP^{\mathbb D}_V \hat F(\RR_{A'}).
    \end{align*}

    We need to show that this is well-defined, i.e.
    that if $\RR$ is of size $m\times n$ and $\hat F$ is well-defined
    for all rectangles strictly smaller than $m\times n$,
    then any horizontal or vertical splitting leads to the same value
    $\hat F(\RR)$.
    If we compare two horiztonal splittings, then
    this follows from associativity.
    Analogously for two vertical splittings.

    Let then
    \begin{align*}
        \RR = \RR_W \COMP^{\RECT}_H \RR_E \quad
        \RR = \RR_{S} \COMP^{\RECT}_V \RR_{N},
    \end{align*}
    be two splittings into non-degenerate rectangles.
    Define
    \begin{align*}
        \RR_{SW} := \RR_W \cap \RR_S,\quad
        \RR_{SE} := \RR_E \cap \RR_S,\quad
        \RR_{NW} := \RR_W \cap \RR_N,\quad
        \RR_{NE} := \RR_E \cap \RR_N.
    \end{align*}

    Then, by assumption and the interchange property
    \begin{align*}
        \hat F(\RR_W) \COMP^{\mathbb D}_H \hat F(\RR_E)
        &=
        (\hat F(\RR_{SW}) \COMP^{\mathbb D}_V \hat F(\RR_{NW}))
        \COMP^{\mathbb D}_H
        (\hat F(\RR_{SE}) \COMP^{\mathbb D}_V \hat F(\RR_{NE})) \\
        &=
        (\hat F(\RR_{SW}) \COMP^{\mathbb D}_H \hat F(\RR_{SE}))
        \COMP^{\mathbb D}_V
        (\hat F(\RR_{NW}) \COMP^{\mathbb D}_H \hat F(\RR_{NE})) \\
        &=
        \hat F(\RR_S) \COMP^{\mathbb D}_V \hat F(\RR_N),
    \end{align*}
    as desired.
    This shows that $\hat F$ is well-defined.

    \medskip

    $\hat F$ is a double functor by construction.
    Indeed, let for example $\RR = \RR_A \COMP^\RECT_H \RR_B$.
    Then
    \begin{align*}
        \hat F( \RR ) &= \hat F( \RR_A ) \COMP^{\mathbb D}_H \hat F(R_B).
    \end{align*}

    Finally, any functor coinciding with $\hat F$ on elementary squares
    must coincide with $\hat F$ on all rectangles, this shows uniqueness.

\end{proof}

\subsection{Aggregation as a double functor on a double category of rectangles}
\label{ss:aggregation_as_double_functor}

\begin{quote}
    \itshape
    {
        Many aggregation operations for $2$-parameter data (e.g. images),
        can be interpreted as functors on a double category of rectangles.
    }
\end{quote}

As in \Cref{ss:aggregation_as_functor} we can now aggregate values
attached to elementary rectangles
to values attached to larger rectangles.
In detail,
let $R$ be the double graph of \Cref{thm:free_double_category},
let $\DD$ be any double category
and let $F: R \to \DD$ be a morphism of double graphs.
This means that
\begin{align*}
    \sC_H( F_2( [i,i+1] \times [j,j+1] ) )
    &=
    F^v_1( \{i\} \times [j,j+1] ) \\
    \tC_H( F_2( [i,i+1] \times [j,j+1] ) )
    &=
    F^v_1( \{i+1\} \times [j,j+1] ) \\
    \sC_V( F_2( [i,i+1] \times [j,j+1] ) )
    &=
    F^h_1( [i,i+1] \times \{j\} ) \\
    \tC_V( F_2( [i,i+1] \times [j,j+1] ) )
    &=
    F^h_1( [i,i+1] \times \{j+1\} ) \\
    \sC_v( F^v_1( \{i\} \times [j,j+1] )
    &=
    F_0( \{i\} \times \{j\} ) \\
    \tC_v( F^v_1( \{i\} \times [j,j+1] )
    &=
    F_0( \{i\} \times \{j+1\} ) \\
    \sC_h( F^h_1( [i,i+1] \times \{j\} )
    &=
    F_0( \{i\} \times \{j\} ) \\
    \tC_h( F^h_1( [i,i+1] \times \{j\} )
    &=
    F_0( \{i+1\} \times \{j\} ).
\end{align*}

If $\DCD$ has only one object,
as is the case for the ``delooping'' of a crossed module \Cref{ex:cm_to_double},
then the last four conditions are void.

Then, by the universal property of $\RECT$,
there exists a unique double functor
\begin{align*}
    \hat F: \RECT \to \DD
\end{align*}
extending $F$.
In particular, we have aggregated values
\begin{align*}
   \hat F( [s_1,t_1] \times [s_2,t_2] )
\end{align*}
attached to all rectangles.

\begin{example}
    \label{ex:aggregation}
    Let $\DD$ be the double category of 
    \Cref{ex:abelian_group}, with $A$ the abelian group $(\R,+)$.

    The example 
    \Cref{ex:double_sum} is obtained by lifting the morphism of double graphs
    \begin{align*}
        F_0(\k)                        &:= *, \\
        F^h_1([s_1,t_1]\times \{x_2\}) &:= *, \\
        F^v_1(\{x_1\}\times [s_2,t_2]) &:= *, \\
        F_2([i,i+1]\times[j,j+1])      &:= x_{i,j}.
    \end{align*}
\end{example}

\begin{example}
    We use the delooping of the crossed module \Cref{ex:crossed_modules:general_linear},
    with $n=2, p=1, q=3$.

    Let $x_\k \in \R^3$, $\k \in \Z^2$, be spatial data
    (an ``RGB image'').
    We want to attach values in $G$ to edges
    and values in $H$ to faces that 
    are compatible in the sense of \Cref{ex:cm_to_double}.

    For concreteness,
    fix $A_1,A_2,A_3 \in \R^{2\times 2}$,
    $Q_1, Q_2, Q_3 \in \R^{3\times 3}$,
    $s_1,s_2,s_3 \in \R$
    and let%
    \footnote{
        The expressions here are arbitrary.
        We just have to make sure that certain submatrices are invertible,
        which we ensure by taking a matrix exponential.
        }
    \begin{align*}
        \eta(z,\bar z)
        &:= 
        (
        \begin{bmatrix}
            \exp\left( \sum_{k=1}^3 A_k \Delta z^{(k)} \right) & 0 \\
            \begin{matrix}
               \sin(\Delta z^{(1)}) & \cos(\Delta z^{(3)})
            \end{matrix}
            & \exp\left( \sum_{k=1}^3 s_k \Delta z^{(k)} \right)
        \end{bmatrix},\\
        &\qquad\quad
        \begin{bmatrix}
            \exp\left( \sum_{k=1}^3 A_k \Delta z^{(k)} \right) &
            \begin{matrix}
                0 & \Delta z^{(1)} & 0 \\
                \Delta z^{(3)} & 0 & \Delta z^{(2)} \\
            \end{matrix} \\
            0 & \exp\left( \sum_{k=1}^3 Q_k \Delta z^{(k)} \right)
        \end{bmatrix} ),
    \end{align*}
    where $\Delta z = z - \bar z$.
    Then, set
    \begin{align*}
        f_1^h( [i,i+1] \times \{j\} )
        :=
        \eta( x_{i+1,j}, x_{i,j} ),
        f_1^v( \{i\} \times [j,j+1] )
        &=
        \eta( x_{i,j+1}, x_{i,j} ).
    \end{align*}
    The elements
    \begin{align*}
        f_2( [i,i+1] \times [j,j+1] ),
    \end{align*}
    must be compatible in the sense that
    \begin{align*}
        \feedbackGG( f_2( [i,i+1] \times [j,j+1] ) )
        &=
        f^h_1( [i,i+1] \times \{j\} )
        f^v_1( \{i+1\} \times [j,j+1] ) \\
        &\qquad
        \times
        f^h_1( [i,i+1] \times \{j+1\} )^{-1}
        f^v_1( \{i\} \times [j,j+1] )^{-1}.
    \end{align*}
    As a consequence,
    \begin{align*}
        f_2( [i,i+1] \times [j,j+1] ) )
        =
        \begin{bmatrix}
            P - \id_{2\times 2} & B \\ 
            R & N,
        \end{bmatrix}
    \end{align*}
    where $P,B,R$ are explicit expressions
    in the boundary elements,
    and $N$ can be any $1\times 3$ matrix,
    and for concreteness we set
    \begin{align*}
        N &=
       \begin{bmatrix}
       \sum_{k=1}^3 (z^{(k)}_{i+1,j+1}-z^{(k)}_{i,j})^2
        &
        0
        &
        \sum_{k=1}^3
            (z^{(k)}_{i+1,j+1}-z^{(k)}_{i+1,j})
            (z^{(k)}_{i+1,j+1}-z^{(k)}_{i,j+1})
        \end{bmatrix}.
    \end{align*}
\end{example}

%
%

\subsection{Parallel scan}
\label{ss:parallel_scan_2}

\begin{quote}
    \itshape
    {
        The parallel scan algorithm discussed in Section~\ref{ss:parallel_scan_1}
        for sequential data can be applied to 
        the double functors of \Cref{ss:aggregation_as_double_functor}
        ``row by row'' and then ``column by column''.
    }
\end{quote}


Given a double functor $\hat{F}: \RECT \rightarrow \DCD$,
we can compute aggregated values
$\hat{F}([0,i] \times [0,j])$, $i \in \{0,1,\ldots,m-1\}$, $j \in \{0,1,\ldots,n-1\}$,
by applying Blelloch's parallel scan algorithm in two phases.

\begin{enumerate}

\item
\textbf{Row-wise scan}: For each row $j \in \{1, \ldots, n\}$,
apply the parallel scan algorithm to compute
\begin{align*}
    g(i,j)
    &:= \hat{F}_2([0,i] \times [j-1,j]) \\
    &= \hat{F}_2([0,i-1] \times [j-1,j]) \COMP_H \hat{F}_2([i-1,i] \times [j-1,j]).
\end{align*}
All rows can be computed in parallel.

\item
\textbf{Column-wise scan}:
Then, for each column $i \in \{1, \ldots, m\}$,
\begin{align*}
    \hat{F}([0,i] \times [0,j])
    =
    \hat{F}([0,i] \times [0,j-1]) \COMP_V g(i,j),
\end{align*}
can also be computed using the parallel scan algorithm.
Againn, all columns can be computed in parallel.

\end{enumerate}

The total time-complexity with $p$ parallel processors
then is $\O((m \cdot n)/p + \log p)$.

\section{Outlook}
\label{sec:conclusion}


\begin{itemize}

    \item 
    Data on irregular geometries: rooted trees, posets, graphs.
    Here, more flexible categorial structures are needed.
    For example, we hypothesize that aggregation on rooted trees would benefit
    from a description in terms of hypergraph categories
    (\cite{baez2018compositional,fong2019hypergraph}).

    \item
    Gauge transformations play an important
    role in path holonomy (which can be seen as a continuous version
    of aggregation; now on smooth enough curves)
    and surface holonomy (which can be seen as a continuous version
    of aggregation; now on smooth enough surfaces);
    see for example \cite{baez2004higher,schreiber2009parallel,schreiber2011smooth}.
    They are also important in the context of
    lattice gauge theories (\cite{parzygnat2019two})
    and tensor networks (\cite{tindall2023gauging}).
    We are unsure what role they will play in the context of data aggregation.

    \item
    Our main example of a double category is 
    the delooping of a crossed module.
    This in fact yields a double groupoid: every morphism is invertible.
    Non-trivial
    (in particular: non-abelian, and the feedback map neither injective nor trivial)
    examples of crossed modules of monoids 
    would provide us with non-trivial, concrete examples of double categories
    which are not double groupoids.
    This could prove very beneficial for applications,
    since a double groupoid imposes a lot of structure
    on the aggregation. In particular, every aggregation
    step can be ``undone'', which leads to a large group of symmetries
    that are not ``seen'' by the aggregation.
    Some work on crossed modules of monoids
    exists (\cite{patchkoria1998crossed,bohm2020crossed,temel2022some,pirashvili2024internal}),
    but all the examples given have either an injective
    feedback map or are abelian.
    
\end{itemize}

\printbibliography

\appendix

\section{Appendix}
\label{sec:appendix}

\begin{definition}
A \DEF{double category} $\DCC$ consists of
\begin{itemize}
    \item a set of \DEF{objects} or \DEF{0-cells} $\DCC_0$,
    \item a set of \DEF{vertical morphisms} or \DEF{vertical 1-cells} $\DCC^v_1$,
    \item a set of \DEF{horizontal morphisms} or \DEF{horizontal 1-cells} $\DCC^h_1$,
    \item a set of \DEF{faces} or \DEF{2-cells} $\DCC_2$,
\end{itemize}
together with maps
\begin{center}
    \begin{tikzcd}[row sep=2em, column sep=2em]
        \DCC_2 \arrow[dd, "\partial_V^+" description, shift left=3, near end] \arrow[dd, "\partial_V^-" description, shift right=3, near end] \arrow[rr, "\partial_H^+" description, shift left=3, near end] \arrow[rr, "\partial_H^-" description, shift right=3, near end] &  & \DCC_1^v \arrow[dd, "\partial_v^-" description, shift right=3, near end] \arrow[dd, "\partial_v^+" description, shift left=3, near end] \arrow[ll, "\i_{H}" description, near end] \\
        & & \\
        \DCC_1^h \arrow[rr, "\partial_h^-" description, shift left=3, near end] \arrow[rr, "\partial_h^+" description, shift right=3, near end] \arrow[uu, "\i_{V}" description, near end]                                                                               &  & \DCC_0 \arrow[ll, "\i_{h}" description, near end] \arrow[uu, "\i_{v}" description, near end]                                                                              
    \end{tikzcd}
    \end{center}
and ``composition'' maps
\begin{align*}
    \COMP_v: \DCC^v_1 \times_{\DCC_0} \DCC^v_1
    &:= \{ (f,g) \in \DCC^v_1 \times \DCC^v_1 \mid \partial_v^+(f) = \partial_v^-(g) \}
    \to \DCC^v_1, \\
    \COMP_h: \DCC^h_1 \times_{\DCC_0} \DCC^h_1 &:= \{ (f,g) \in \DCC^h_1 \times \DCC^h_1 \mid \partial_h^+(f) = \partial_h^-(g) \}
    \to \DCC^h_1, \\
    \COMP_V: \DCC_2 \times_{\DCC^h_1} \DCC_2 &:= \{ (\alpha,\beta) \in \DCC_2 \times \DCC_2 \mid \partial_H^+(\alpha) = \partial_H^-(\beta) \}
    \to \DCC_2, \\
    \COMP_H: \DCC_2 \times_{\DCC^v_1} \DCC_2 &:= \{ (\alpha,\beta) \in \DCC_2 \times \DCC_2 \mid \partial_V^+(\alpha) = \partial_V^-(\beta) \}
    \to \DCC_2,
\end{align*}
that satisfy the following axioms:
\begin{itemize}
\item
    $(\categoryConstellation{\DCC_0}{\DCC^h_1}{\partial^-_h}{\partial^+_h}{\i_{h}}, \COMP_{h} )$
forms a category (\textbf{horizontal $1$-cells}),

\item
    $( \categoryConstellation{\DCC_0}{\DCC^v_1}{\partial^-_v}{\partial^+_v}{\i_{v}}, \COMP_{v} )$
forms a category (\textbf{vertical $1$-cells}),

\item 
    $( \inlineCategoryConstellation{\DCC^v_1}{\DCC_2}{\partial^-_H}{\partial^+_H}{\i_{H}}, \COMP_{H} )$,
forms a category (\textbf{horizontal composition of $2$-cells}),

\item
    $( \inlineCategoryConstellation{\DCC^h_1}{\DCC_2}{\partial^-_V}{\partial^+_V}{\i_{V}}, \COMP_{V} )$
forms a category (\textbf{vertical composition of $2$-cells}),


    \item ``the interchange law'' holds, i.e. for composable $2$-cells
    \begin{align*}
        ( \alpha \COMP_{H} \beta ) \COMP_{V} ( \alpha' \COMP_{H} \beta' )
        =
        ( \alpha \COMP_{V} \alpha' ) \COMP_{H} ( \beta \COMP_{V} \beta' ).
    \end{align*}
    
    \item ``the four corners of a $2$-cell are well-defined'', i.e.
    \begin{align*}
        \partial_V^- \COMP \partial_h^- &= \partial_H^- \COMP \partial_v^- & \partial_V^+ \COMP \partial_h^- &= \partial_H^- \COMP \partial_v^+  \\
        \partial_V^- \COMP \partial_h^+ &= \partial_H^+ \COMP \partial_v^- & \partial_V^+ \COMP \partial_h^+ &= \partial_H^+ \COMP \partial_v^+,
    \end{align*}

    \item
    ``the boundaries are morphisms'', i.e.
    \begin{align*}
       (\partial^+_v,\partial^+_V):
        ( \inlineCategoryConstellation{\DCC^v_1}{\DCC_2}{\partial^-_H}{\partial^+_H}{\i_H},\COMP_H )
            \to
        ( \inlineCategoryConstellation{\DCC_0}{\DCC^h_1}{\partial^-_h}{\partial^+_h}{\i_h},\COMP_h ) \\
       (\partial^+_h,\partial^+_H):
        ( \inlineCategoryConstellation{\DCC^h_1}{\DCC_2}{\partial^-_V}{\partial^+_V}{i_V},\COMP_V )
            \to
        ( \inlineCategoryConstellation{\DCC_0}{\DCC^v_1}{\partial^-_v}{\partial^+_v}{i_0},\COMP_v ),
    \end{align*}
    are functors.
    Analogously for $(\partial^-_v,\partial^-_V)$ and $(\partial^-_h,\partial^-_H)$.%
    \footnote{Recall \Cref{not:functor}:
    $\partial^+_v$ is the object-part of a functor
    and $\partial^+_V$ is the morphism-part of a functor, etc.}


    \item ``identities are compatible'', i.e.
    \begin{align*}
       \i_V( \i_h(x) ) = \i_H( \i_v(x) ) \qquad \forall x \in \DCC_0.
    \end{align*}

\end{itemize}

A double category $\DCC$ is called \DEF{edge-symmetric} (\cite{brown1999double}),
if the two categories of $1$-cells are the same,
i.e.  $\DCC^v_1 = \DCC^h_1 = \DCC_1$,
$\COMP_h = \COMP_v =: \COMP$,
and
$\partial^\pm_v = \partial^\pm_h =: \partial^\pm$.

\bigskip

A \DEF{double functor} $F: \DCC \to \DCD$ between double categories
is given by four maps $F_i: \DCC_i \to \DCD_i$, $i=0,2$, $F_1^h: \DCC_1^h \to \DCD_1^h$, $F_1^v: \DCC_1^v \to \DCD_1^v$,
which commute with all the structure maps of double categories.

\end{definition}

\bigskip

\begin{definition}
    A \DEF{double graph} $\DGG$
    is a commuting diagram in $\SET$ of the form
    \begin{center}
    \begin{tikzcd}
        \DGG_2
        \arrow[dd, "\partial_V^+" description, shift left=3]
        \arrow[dd, "\partial_V^-" description, shift right=3]
        \arrow[rr, "\partial_H^+" description, shift left=3]
        \arrow[rr, "\partial_H^-" description, shift right=3] &  & \DGG^v_1
        \arrow[dd, "\partial_v^-" description, shift right=2]
        \arrow[dd, "\partial_v^+" description, shift left=2] \\
        & & \\
        \DGG^h_1
        \arrow[rr, "\partial_h^-" description, shift left=3]
        \arrow[rr, "\partial_h^+" description, shift right=3]
         &  & \DGG_0 \end{tikzcd}
    \end{center}

    A \DEF{morphism of double graphs} $F: \DGG \to \DGH$
    is given by four maps $F_i: \DGG_i \to \DGH_i$, $i=0,2$, $F_1^h: \DGG_1^h \to \DGH_1^h$, $F_1^v: \DGG_1^v \to \DGH_1^v$,
    which commute with all the structure maps of double graphs.

\end{definition}

\bigskip
\bigskip

\begin{example}
    \label{ex:full_cm_to_double}

    Continuing \Cref{ex:cm_to_double}.
    First, the composition is well-defined, i.e.
    the composed $2$-cells satisfy the boundary condition.
    First, we check the horizontal composition
    (recall that for the composition to be well-defined, $x_e = y_w$):
	\begin{align*}
        \feedbackGG( \actionGG_{x_s}(Y) X ) x_w (x_n y_n)
        &=
        \feedbackGG( \actionGG_{x_s}(Y) ) \feedbackGG( X ) x_w x_n y_n
        =
        \feedbackGG( \actionGG_{x_s}(Y) ) x_s x_e y_n \\
        &=
        x_s \feedbackGG( Y ) x_e y_n
        =
        x_s \feedbackGG( Y ) y_w y_n
        =
        x_s y_s y_e,
	\end{align*}
    as desired.
    Regarding the vertical composition, we have
    (recall that for the composition to be well-defined, $x_n = x'_s$):
    \begin{align*}
        \feedbackGG( X \actionGG_{x_w}(X') ) (x_w x'_w) x'_n
        &=
        \feedbackGG( X ) \feedbackGG( \actionGG_{x_w}(X') ) x_w x'_w x'_n \\
        &=
        \feedbackGG( X ) x_w \feedbackGG( X' ) x'_w x'_n \\
        &=
        \feedbackGG( X ) x_w x'_s x'_e \\
        &=
        \feedbackGG( X ) x_w x_n x'_e \\
        &=
        x_s x_e x'_e,
    \end{align*}
    as desired.

    Associativity of the composition follows from a straightforward calculation.

    We verify the interchange law.

    \begin{center}
    \begin{tikzpicture}[
        scale=0.9, 
        transform shape, 
        cell/.style={
            rectangle,
            draw,
            minimum width=2cm,
            minimum height=2cm,
        },
        boundary/.style={
            draw=green!70!black,
            rounded corners,
            thick,
            inner sep=20pt
        }
    ]
        \node[cell] (Xp) {$X'$};
        \node[cell, right=2cm of Xp] (Yp) {$Y'$};
        \node[font=\Large] at ($(Xp.east)!0.5!(Yp.west)$) {$\COMP_H$};
        
        \node[cell, below=2cm of Xp] (X) {$X$};
        \node[cell, right=2cm of X] (Y) {$Y$};
        \node[font=\Large] at ($(X.east)!0.5!(Y.west)$) {$\COMP_H$};
        
        \node[above] at (Xp.north) {$x'_n$};
        \node[right] at (Xp.east) {$x'_e$};
        \node[below] at (Xp.south) {$x'_s$};
        \node[left] at (Xp.west) {$x'_w$};
        
        \node[above] at (Yp.north) {$y'_n$};
        \node[right] at (Yp.east) {$y'_e$};
        \node[below] at (Yp.south) {$y'_s$};
        \node[left] at (Yp.west) {$y'_w$};
        
        \node[above] at (X.north) {$x_n$};
        \node[right] at (X.east) {$x_e$};
        \node[below] at (X.south) {$x_s$};
        \node[left] at (X.west) {$x_w$};
        
        \node[above] at (Y.north) {$y_n$};
        \node[right] at (Y.east) {$y_e$};
        \node[below] at (Y.south) {$y_s$};
        \node[left] at (Y.west) {$y_w$};
        
        \node[boundary, fit=(Xp) (Yp)] (top_boundary) {};
        \node[boundary, fit=(X) (Y)] (bottom_boundary) {};
        
        \node[font=\Large] (compv) at ($(top_boundary.south)!0.5!(bottom_boundary.north)$) {$\COMP_V$};
        
        \coordinate (top_center) at ($(Xp)!0.5!(Yp)$);
        \coordinate (bottom_center) at ($(X)!0.5!(Y)$);
        
        \node[font=\Large] at ($(compv)+(4.5cm,0)$) {$=$};
        
        \node[cell] (A) at ($(top_center)+(6cm,0)$) {$\actionGG_{x'_s}(Y') X'$};
        
        \node[cell] (B) at ($(bottom_center)+(6cm,0)$) {$\actionGG_{x_s}(Y) X$};
        
        \node[font=\Large] (compv2) at ($(A.south)!0.5!(B.north)$) {$\COMP_V$};
        
        \node[above] at (A.north) {$x'_n y'_n$};
        \node[below] at (A.south) {$x'_s y'_s$};
        \node[left] at (A.west) {$x'_w$};
        \node[right=0.3cm] at (A.east) {$y'_e$};
        
        \node[above] at (B.north) {$x_n y_n$};
        \node[below] at (B.south) {$x_s y_s$};
        \node[left] at (B.west) {$x_w$};
        \node[right=0.3cm] at (B.east) {$y_e$};
        
        \node[font=\Large] at ($(compv2)+(2cm,0)$) {$=$};
        
        \coordinate (center_both) at ($(top_center)!0.5!(bottom_center)$);
        \node[cell, minimum width=2.5cm, minimum height=2.5cm] (C) at ($(center_both)+(10.5cm,0)$) {$A$};
        
        \node[above] at (C.north) {$x'_n y'_n$};
        \node[below] at (C.south) {$x_s y_s$};
        \node[left] at (C.west) {$x_w x_w'$};
        \node[right=0.3cm] at (C.east) {$y_e y_e'$};
    \end{tikzpicture}
    \end{center}

\begin{center}
\begin{tikzpicture}[
    scale=.9, 
    transform shape, 
    cell/.style={
        rectangle,
        draw,
        minimum width=2cm,
        minimum height=2cm,
    },
    boundary/.style={
        draw=green!70!black,
        rounded corners,
        thick,
        inner sep=18pt
    }
]
    \node[cell] (Xp) {$X'$};
    \node[cell, right=2cm of Xp] (Yp) {$Y'$};
    
    \node[cell, below=2cm of Xp] (X) {$X$};
    \node[cell, right=2cm of X] (Y) {$Y$};
    
    \node[font=\Large] at ($(Xp.south)!0.5!(X.north)$) {$\COMP_V$};
    \node[font=\Large] at ($(Yp.south)!0.5!(Y.north)$) {$\COMP_V$};
    
    \node[above] at (Xp.north) {$x'_n$};
    \node[right] at (Xp.east) {$x'_e$};
    \node[below] at (Xp.south) {$x'_s$};
    \node[left] at (Xp.west) {$x'_w$};
    
    \node[above] at (Yp.north) {$y'_n$};
    \node[right] at (Yp.east) {$y'_e$};
    \node[below] at (Yp.south) {$y'_s$};
    \node[left] at (Yp.west) {$y'_w$};
    
    \node[above] at (X.north) {$x_n$};
    \node[right] at (X.east) {$x_e$};
    \node[below] at (X.south) {$x_s$};
    \node[left] at (X.west) {$x_w$};
    
    \node[above] at (Y.north) {$y_n$};
    \node[right] at (Y.east) {$y_e$};
    \node[below] at (Y.south) {$y_s$};
    \node[left] at (Y.west) {$y_w$};
    
    \node[boundary, fit=(Xp) (X)] (left_boundary) {};
    \node[boundary, fit=(Yp) (Y)] (right_boundary) {};
    
    \node[font=\Large] (comph) at ($(left_boundary.east)!0.5!(right_boundary.west)$) {$\COMP_H$};
    
    \coordinate (left_center) at ($(Xp)!0.5!(X)$);
    \coordinate (right_center) at ($(Yp)!0.5!(Y)$);
    
    \node[font=\Large] (firstequal) at ($(comph)+(4.5cm,0)$) {$=$};
    
    \node[cell] (D) at ($(left_center)+(9cm,0)$) {$X \actionGG_{x_w}(X')$};
    
    \node[cell] (E) at ($(right_center)+(10.5cm,0)$) {$Y \actionGG_{y_w}(Y')$};
    
    \node[font=\Large] (comph2) at ($(D.east)!0.5!(E.west)$) {$\COMP_H$};
    
    \node[above] at (D.north) {$x'_n$};
    \node[below] at (D.south) {$x_s$};
    \node[left] at (D.west) {$x_w x_w'$};
    \node[right] at (D.east) {$x_e x'_e$};
    
    \node[above] at (E.north) {$y'_n$};
    \node[below] at (E.south) {$y_s$};
    \node[left] at (E.west) {$y_w y'_w$};
    \node[right] at (E.east) {$y_e y'_e$};
    
    \node[font=\Large] (secondequal) at ($(firstequal)+(0cm,-6cm)$) {$=$};
    
    \node[cell, minimum width=2.5cm, minimum height=2.5cm] (C) at ($(secondequal)+(3cm,0)$) {$B$};
    
    \node[above] at (C.north) {$x'_n y'_n$};
    \node[below] at (C.south) {$x_s y_s$};
    \node[left] at (C.west) {$x_w x_w'$};
    \node[right=0.3cm] at (C.east) {$y_e y_e'$};
\end{tikzpicture}
\end{center}

Finally, we note
\begin{align*}
    \actionGG_{x_s x_e}(Y') X
    =
    \actionGG_{\tau(X) x_w x_n}(Y') X
    =
    \actionGG_{\tau(X)} ( \actionGG_{x_w x_n}(Y') ) X
    =
    X \actionGG_{x_w x_n}(Y'),
\end{align*}
and hence
\begin{align*}
    A
    &= 
    \actionGG_{x_s}(Y) X \actionGG_{x_w}( \actionGG_{x'_s}(Y') X' )
    =
    \actionGG_{x_s}(Y) X \actionGG_{x_w x_n}(Y') \actionGG_{x_w}( X' ) \\
    &=
    \actionGG_{x_s}( Y \actionGG_{y_w}(Y') ) X \actionGG_{x_w}(X')
    =
    B.
\end{align*}

\end{example}

\section*{Acknowledgments}

The author thanks 
Ilya Chevyrev,
Kurusch Ebrahimi-Fard,
Darrick Lee
and
Nikolas Tapia
for fruitful discussions on the categorial view
on two-parameter data.
Part of this work was carried out during the
\emph{2024 Lie-Stormer Colloquium Foundational and computational aspects of symmetry}
(\url{https://sites.google.com/view/lscolloq2024/home})
and the
\emph{2024 TMS Colloquium Multi-Parameter Signatures}
(\url{https://sites.google.com/view/2dsignatures/home}).

\end{document}